\documentclass{amsart}
\usepackage[T1]{fontenc}
\usepackage[utf8]{inputenc}
\usepackage{amsmath}
\usepackage{array}
\usepackage{amsthm}
\usepackage{amssymb}
\input xy
\xyoption{all}

\usepackage[cmtip,all]{xy}

\usepackage{stmaryrd}

\usepackage{tikz}
\usetikzlibrary{matrix,arrows,decorations.pathmorphing}
\usepackage{tikz-cd}

\usepackage{enumerate}
\usepackage{mathtools}

\usepackage[french, english]{babel}

\usepackage{xcolor}

\newtheorem{thmintro}{Theorem}
\newtheorem{corintro}[thmintro]{Corollary}

\newtheorem{thm}{Theorem}[section]
\newtheorem{pr}[thm]{Proposition}

\newtheorem{lm}[thm]{Lemma}

\theoremstyle{definition}
\newtheorem{defi}[thm]{Definition}
\newtheorem{defi-prop}[thm]{Proposition-Definition}

\newtheorem{exintro}[thmintro]{Example}

\theoremstyle{remark}
\newtheorem{rk}[thm]{Remark}

\newcommand{\A}{{\mathcal{A}}}
\newcommand{\B}{{\mathcal{B}}}

\newcommand{\F}{{\mathcal{F}}}
\newcommand{\Pp}{{\mathcal{P}}}
\newcommand{\add}{\mathbf{Add}}

\newcommand{\I}{{\mathcal{I}}}
\newcommand{\K}{{\mathcal{K}}}
\newcommand{\LL}{{\mathcal{L}}}

\newcommand{\V}{{\mathcal{V}}}

\newcommand{\T}{{\mathcal{T}}}

\newcommand{\Md}{\text{-}\mathbf{Mod}}

\newcommand{\Si}{\mathfrak{S}}
\newcommand{\id}{\mathrm{id}}
\newcommand{\FF}{\mathbb{F}}

\newcommand{\res}{{\mathrm{res}}}

\newcommand{\op}{{\mathrm{op}}}
\newcommand{\Fp}{{\mathbb{F}_p}}

\newcommand{\Lan}{\mathrm{Lan}}
\newcommand{\colim}{\mathop{\mathrm{colim}}}
\newcommand{\Proj}{\mathbf{P}}

\newcommand{\GL}{\operatorname{GL}}

\newcommand{\Ext}{\mathrm{Ext}}
\newcommand{\Tor}{\mathrm{Tor}}
\newcommand{\Hom}{\mathrm{Hom}}

\newcommand{\kk}{\Bbbk}
\newcommand{\PP}{\mathcal{P}}

\begin{document}

\title{The homology of additive functors in prime characteristic}

\author[A. Djament]{Aur\'elien Djament}
\address{Institut Galil\'ee, Universit\'e Sorbonne Paris Nord, 99 avenue Jean-Baptiste Cl\'ement, 93430 Villetaneuse, France.}
\email{djament@math.univ-paris13.fr}
\urladdr{https://djament.perso.math.cnrs.fr/}
\thanks{A. Djament is partly supported by the projects ChroK (ANR-16-CE40-0003), AlMaRe (ANR-19-CE40-0001-01) and Labex CEMPI (ANR-11-LABX-0007-01)}

\author[A. Touz\'e]{Antoine Touz\'e}
\address{Univ. Lille, CNRS, UMR 8524 - Laboratoire Paul Painlev\'e, F-59000 Lille, France}
\email{antoine.touze@univ-lille.fr}
\urladdr{https://pro.univ-lille.fr/antoine-touze/}
\thanks{A. Touz\'e is partly supported by the project ChroK (ANR-16-CE40-0003) and Labex CEMPI (ANR-11-LABX-0007-01)}

\begin{abstract}
We express certain $\Ext$ and $\Tor$ groups in the category of all functors from an $\Fp$-linear additive category $\A$ to vector spaces in terms of $\Ext$ and $\Tor$ computed in the full subcategory of additive functors from $\A$ to vector spaces.
It may be applied to group homology computations for general line groups. 
\end{abstract}

\maketitle

\selectlanguage{french}
\renewcommand{\abstractname}{R\'esum\'e}
\begin{abstract}
Nous exprimons certains groupes d'extensions et de torsion dans la cat\'egorie de tous les foncteurs depuis une cat\'egorie additive $\Fp$-lin\'eaire vers les espaces vectoriels en termes d'$\Ext$ et de $\Tor$ calculés dans la sous-catégorie pleine des foncteurs additifs de $\A$ vers les espaces vectoriels. Cela peut s'appliquer à des calculs d'homologie des groupes linéaires.
\end{abstract}

\selectlanguage{english}

\section{Introduction}

Let $\rho$ and $\pi$ be two additive functors from an essentially small additive category $\A$ to the category $\V_\kk$ of $\kk$-vector spaces. There are two natural ways to define the higher extensions between $\rho$ and $\pi$. Firstly, we can consider $\pi$ and $\rho$ as objects of the abelian category $\add(\A,\kk)$ of additive functors from $\A$ to $\kk$-vector spaces. Secondly, we can consider $\pi$ and $\rho$ as objects of the abelian category $\F(\A,\kk)$ of \emph{all functors} from $\A$ to $\kk$-vector spaces.
Since $\add(\A,\kk)$ is a full abelian subcategory of $\F(\A,\kk)$, there is a canonical map of graded vector spaces:
\begin{align*}
\Phi:\Ext^*_{\add(\A,\kk)}(\pi,\rho)\to \Ext^*_{\F(\A,\kk)}(\pi,\rho)\;.
\end{align*}
Note that when
$\A=\Proj_R$, the category of finitely generated projective right modules over a ring $R$, every additive functor is isomorphic to $t_M:P\mapsto P\otimes_R M$ for some $(R,\kk)$-bimodule $M$, and there is a graded isomorphism:  
\[\Ext^*_{\add(\Proj_R,\kk)}(t_M,t_N)\simeq \Ext^*_{R\otimes_{\mathbb{Z}}\kk}(M,N)\;.\] 
Thus the source of $\Phi$ be computed by standard homological algebra in module categories. In sharp contrast, $\F(\Proj_R,\kk)$ is not equivalent to a module category if $R\ne 0$, which makes the target of $\Phi$ a bit more mysterious; in some cases it is related to topological Hochschild homology \cite{PiraWald}.

If $\kk$ is a field of characteristic zero, then $\Phi$ is an isomorphism by \cite[Theorem 1.2]{Dja-Ext-Pol}. This is no longer the case if $\kk$ is a field of positive characteristic. For example, if $\kk$ is a finite field and if $\A=\Proj_\kk$ then the ring $\kk\otimes_\mathbb{Z}\kk$ is semi-simple. Hence the source of $\Phi$ is always zero in positive degrees, whereas the fundamental computation of \cite[Theorem 0.1]{FLS} shows that the target of $\Phi$  may be nonzero in infinitely many positive degrees. 

Let us introduce a few notations to formulate our first theorem. Let $I$ denote the embedding $\Proj_\kk\hookrightarrow \V_\kk$. We consider $\Ext^*_{\F(\Proj_\kk,\kk)}(I,I)$ as a graded algebra for Yoneda composition.

Notice furthermore that every functor $\pi:\A\to \V_\kk$ yields an  exact restriction functor $\pi^*:\F(\Proj_\kk,\kk)\to\F(\A,\kk)$, which sends $F$ to the composition $\pi^*F = \overline{F}\circ \pi$, where $\overline{F}:\V_\kk\to \V_\kk$ denotes the left Kan extension of $F$ to the category of all vector spaces, that is, $\overline{F}(v)=\mathrm{colim}\, F(u)$, with colimit taken over the filtered poset of finite dimensional subspaces $u$ of $v$, ordered by inclusion.
We therefore have a graded map:
\[\pi^*:\Ext^*_{\F(\Proj_\kk,\kk)}(F,G)\to  \Ext^*_{\F(\A,\kk)}(\pi^*F,\pi^*G)\;.\]

We can now define, for all additive functors $\pi, \rho: \A\to\V_\kk$, a linear map (here `$\circ$' stands for Yoneda composition)
\begin{align*}\tag{$\dagger$}\label{ppsi}
\Psi:\Ext^*_{\add(\A,\kk)}(\pi,\rho)\otimes_\kk \Ext^*_{\F(\Proj_\kk,\kk)}(I,I) & \to\Ext^*_{\F(\A,\kk)}(\pi,\rho)\\
e\otimes e' & \mapsto\Phi(e)\circ \pi^*e'\;.
\end{align*}

\begin{thmintro}\label{thm-intro-3}
Let $\kk$ be a perfect field of positive characteristic $p$. There is an isomorphism of graded $\kk$-algebras
\[\Ext^*_{\F(\Proj_\kk,\kk)}(I,I) \simeq \kk[e_1,e_2,e_3,\dots]/\langle e_1^p, e_2^p,e_3^p \dots\rangle\;,\]
where each class $e_i$ has cohomological degree $2p^{i-1}$.
If in addition the essentially small additive category $\A$ is $\Fp$-linear, then the map
\begin{align*}
\Psi:\Ext^*_{\add(\A,\kk)}(\pi,\rho)\otimes_\kk \Ext^*_{\F(\Proj_\kk,\kk)}(I,I)\xrightarrow{\simeq} \Ext^*_{\F(\A,\kk)}(\pi,\rho)
\end{align*}
defined by $(\dagger)$ is a graded $\kk$-linear isomorphism for all additive functors $\pi$ and $\rho$.
\end{thmintro}
If $\kk$ is a finite field, the computation of $\Ext^*_{\F(\Proj_\kk,\kk)}(I,I)$ given in this theorem is nothing but \cite[Theorem 0.1]{FLS}.  
Although technically  different, theorem \ref{thm-intro-3} is similar in spirit to the main theorems of \cite{LL,LLbis} (see also \cite[Corollary 4.2]{PiraSpectral}) which compare Hochschild homology and topological Hochschild homology of smooth $\Fp$-algebras. 

The calculations of \cite{PiraZnZ} show that $\Psi$ may fail to be an isomorphism if $\A$ is not $\Fp$-linear, for example when $\A=\mathbf{P}_{\mathbb{Z}/p^r}$ for $r>1$ and $\pi=\rho$ is reduction modulo $p$. Remark \ref{rk-perfect} shows that the isomorphism may fail if $\kk$ is not perfect.

The proof of theorem~\ref{thm-intro-3} consists of successive reductions from the classical case of $\A=\Proj_\Fp$ \cite{FLS}. It relies on several well-known tools as adjunctions between exact functors, which are propagated to higher $\Ext$, comparison of $\delta$-functors, or extension of scalars at the target (from functors to $\Fp$-vector spaces to functors to $\kk$-vector spaces). We use \textit{strict} polynomial functors \cite{FS} as intermediate functor categories for which base change behaves very nicely in cohomology,  to prove lemma~\ref{lm-step-1}, the first step of the proof of the theorem. We use the perfectness assumption on $\kk$ through the Hochschild-Kostant-Rosenberg theorem (which implies that $HH^*(\kk)$ is trivial) to prove lemma~\ref{lm-iso}, using the identification between additive functors in $\F(\Proj_\kk,\kk)$ and $(\kk,\kk)$-bimodules. The last ingredient to prove theorem~\ref{thm-intro-3} is the use of the $\aleph$-additive envelope $\A^\aleph$ of $\A$ (see section~\ref{sect-env}), for a big enough regular cardinal $\aleph$, which is roughly speaking obtained from the additive category $\A$ by formally adding $\aleph$-small coproducts. We need it for an important adjunction argument because the representable functors $\A(a,-)$ may take values in \textit{infinite-dimensional} $\Fp$-vector spaces, so we pass through $\Proj_\Fp^\aleph$ and $\A^\aleph$ (see the proof of lemma~\ref{lm-step-2}).

\subsection*{Comparison of $\Tor$}
If $\kk$ is an arbitrary field and $\A$ an arbitrary essentially small additive category, there is a tensor product over $\A$ \cite[IX.6]{ML}:
\[
\underset{\kk[\A]}{\otimes} :\; \F(\A^\op,\kk)\times \F(\A,\kk) \to   \V_\kk\;,
\]
with left derived functors denoted by $\Tor_*^{\kk[\A]}(F,G)$. 
One can also consider the restriction of the tensor product to the full subcategories of additive functors, whose left derived functors yield an "additive $\Tor$" denoted by $\Tor^{\kk\otimes_\mathbb{Z}\A}(F,G)$ because an additive functor $\A\to\V_\kk$ is the same as a $\kk$-linear functor $\kk\otimes_\mathbb{Z}\A\to\V_\kk$.

Note that if $\A=\Proj_R$, the additive $\Tor$ can be computed in terms of modules. Indeed, every contravariant additive functor from $\A$ to vector spaces has the form $h_M:P\mapsto \Hom_R(P,M)$ for some $(\kk,R)$-bimodule $M$ and there is a graded isomorphism: 
\begin{align}
\Tor^{\Proj_\kk\otimes_{\mathbb{Z}}\kk}(h_M,t_N)\simeq \Tor^{R\otimes_{\mathbb{Z}}\kk}_*(M,N)\;.\label{eq-add-Tor}
\end{align}

If $\kk$ has characteristic zero, then $\Tor^{\kk\otimes_\mathbb{Z}\A}(\pi,\rho)$ is isomorphic to $\Tor_*^{\kk[\A]}(\pi,\rho)$ for all additive functors $\pi$ and $\rho$, but this is not the case in positive characteristic. In fact, we can dualize theorem \ref{thm-intro-3} to obtain the following result. 
\begin{corintro}\label{cor-intro}
Let $\kk$ be a perfect field of positive characteristic and let $\A$ be an essentially small additive $\Fp$-linear category. Let $T_*$ denote the graded vector space equal to $\kk$ in every even degree, and to zero in every odd degree. There is a graded isomorphism, natural with respect to the additive functors $\pi$ and $\rho$:
\[ \Tor_*^{\kk[\A]}(\pi,\rho)\simeq\Tor_*^{\kk\otimes_{\mathbb{Z}}\A}(\pi,\rho)\otimes_\kk T_*\;.\]
\end{corintro}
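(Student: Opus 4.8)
\emph{Overall strategy.} I would deduce the corollary from Theorem~\ref{thm-intro-3} by $\kk$-linear duality. For any field $\kk$ and any svelte additive category $\A$, the pointwise dual $F\mapsto F^{\vee}$ (with $F^{\vee}(a)=\Hom_{\kk}(F(a),\kk)$) is an exact contravariant functor $\F(\A,\kk)\to\F(\A^{\op},\kk)$ preserving additivity, and the tensor--hom adjunction supplies a natural isomorphism $\Hom_{\kk}\bigl(G\underset{\A}{\otimes}F,\kk\bigr)\cong\Hom_{\F(\A,\kk)}(F,G^{\vee})$ for $G\in\F(\A^{\op},\kk)$, $F\in\F(\A,\kk)$. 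Evaluating this at a projective resolution of $F$ and using exactness of $(-)^{\vee}$ yields natural ``universal coefficient'' isomorphisms of graded vector spaces
\begin{align*}
\Tor_{*}^{\kk[\A]}(G,F)^{\vee}\cong\Ext^{*}_{\F(\A,\kk)}(F,G^{\vee}),\qquad \Tor_{*}^{\kk\otimes_{\mathbb{Z}}\A}(G,F)^{\vee}\cong\Ext^{*}_{\add(\A,\kk)}(F,G^{\vee}),
\end{align*}
the second one for additive $F,G$ (the same computation carried out inside $\add(\A,\kk)$, whose short exact sequences remain short exact in $\F(\A,\kk)$).

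\emph{Chaining.} The monomials $e_{1}^{a_{1}}e_{2}^{a_{2}}\cdots$ with $0\le a_{i}\le p-1$ form a $\kk$-basis of $E^{*}$, of degree $2\sum_{i}a_{i}p^{i-1}$; uniqueness of base-$p$ expansions then shows that $E^{*}$ is one-dimensional in each even degree and zero in each odd degree, so $E^{*}\cong T_{*}$ as graded vector spaces and $E^{*}$ is finite-dimensional in each degree. Applying Theorem~\ref{thm-intro-3} to the additive functors $\rho$ and $\pi^{\vee}$ and chaining the isomorphisms above gives, naturally in the additive functors $\pi$ and $\rho$,
\begin{align*}
\Tor_{*}^{\kk[\A]}(\pi,\rho)^{\vee}\cong\Ext^{*}_{\F(\A,\kk)}(\rho,\pi^{\vee})\cong\Ext^{*}_{\add(\A,\kk)}(\rho,\pi^{\vee})\otimes_{\kk}E^{*}\cong\Bigl(\Tor_{*}^{\kk\otimes_{\mathbb{Z}}\A}(\pi,\rho)\otimes_{\kk}T_{*}\Bigr)^{\vee},
\end{align*}
the last step using that $T_{*}$ is degreewise finite-dimensional, so that $(-)^{\vee}$ passes through the tensor factor.

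\emph{Removing the dual --- the main obstacle.} It remains to promote this natural isomorphism of $\kk$-linear duals to an isomorphism of the undualized graded vector spaces, and a blunt double-dualization is not legitimate: for a general svelte $\A$ the space $\Tor_{*}^{\kk[\A]}(\pi,\rho)$ need not be degreewise finite-dimensional, even for finitely presented $\pi,\rho$. I would circumvent this by dualizing the \emph{proof} of Theorem~\ref{thm-intro-3} rather than its statement. Concretely, I would first establish the local $\Tor$-computation
\begin{align*}
\Tor_{*}^{\kk[\A]}\bigl(\pi,\;\kk\otimes_{\Fp}\A(a,-)\bigr)\cong\pi(a)\otimes_{\kk}T_{*}
\end{align*}
(natural in $\pi$ and in $a$) by running, for $\Tor$, the very computation that in the proof of Theorem~\ref{thm-intro-3} yields the local $\Ext$-isomorphism $\Ext^{*}_{\F(\A,\kk)}\bigl(\kk\otimes_{\Fp}\A(a,-),G\bigr)\cong G(a)\otimes_{\kk}E^{*}$: whatever explicit $\F(\A,\kk)$-projective resolution or chain-homotopy equivalence realizes the latter, feeding $\pi\underset{\A}{\otimes}(-)$ into it in place of $\Hom_{\F(\A,\kk)}(-,G)$ realizes the former, since over a field the homology of a complex is the $\kk$-linear dual of the cohomology of its $\kk$-linear dual --- and at this level no infinite-dimensional space is dualized. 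I would then globalize by applying $\pi\underset{\A}{\otimes}^{\mathbf{L}}(-)$ to an $\add(\A,\kk)$-projective resolution of $\rho$, producing a hyper-$\Tor$ spectral sequence with $E^{2}_{p,q}\cong\Tor_{p}^{\kk\otimes_{\mathbb{Z}}\A}(\pi,\rho)\otimes_{\kk}T_{q}$ converging to $\Tor_{p+q}^{\kk[\A]}(\pi,\rho)$, and conclude with the collapse of this spectral sequence and the splitting of the induced filtration on the abutment --- exactly the $\Tor$-analogue of the corresponding step in the proof of Theorem~\ref{thm-intro-3} (obtained, for instance, from naturality in $\pi$ and $\rho$ or from a module structure over the graded algebra $T_{*}$ dual to $E^{*}$). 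I expect this last point --- the collapse-and-splitting, together with checking that the local dualization above is genuinely legitimate --- to be the only real difficulty; everything else is formal.
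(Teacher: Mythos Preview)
Your duality instinct is exactly the paper's approach, and you have correctly identified the obstruction: with $V=\kk$ one only obtains an isomorphism of $\kk$-duals, which cannot be undone when the $\Tor$ groups are infinite-dimensional. But the fix is a one-line refinement of what you already wrote, not a rerun of the whole proof. Instead of dualizing into $\kk$, dualize into an \emph{arbitrary} vector space $V$: for contravariant $\pi$ set $D_V\pi(a)=\Hom_\kk(\pi(a),V)$. The tensor--hom adjunction now gives, naturally in $\pi$, $\rho$ \emph{and} $V$,
\[
\Hom_\kk\bigl(\Tor_*^{\kk[\A]}(\pi,\rho),V\bigr)\;\simeq\;\Ext^*_{\F(\A,\kk)}(\rho,D_V\pi)\;,
\]
and the analogous isomorphism between additive $\Tor$ and $\Ext$. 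Chaining with Theorem~\ref{thm-intro-3} (and using, as you observed, that $E^*\simeq T_*$ is one-dimensional in each degree, so $\Hom_\kk(-\otimes_\kk T_*,V)\simeq \Hom_\kk(-,V)\otimes_\kk E^*$) yields
\[
\Hom_\kk\bigl(\Tor_*^{\kk[\A]}(\pi,\rho),V\bigr)\;\simeq\;\Hom_\kk\bigl(\Tor_*^{\kk\otimes_{\mathbb{Z}}\A}(\pi,\rho)\otimes_\kk T_*,\,V\bigr)
\]
naturally in $V$. The Yoneda lemma then finishes at once: a natural isomorphism $\Hom_\kk(A,-)\simeq\Hom_\kk(B,-)$ of functors on $\V_\kk$ forces $A\simeq B$.

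Your proposed detour---redoing the local computation and the spectral sequence on the $\Tor$ side---is a plausible alternative (the adjunction steps used in the proof of Theorem~\ref{thm-intro-3} do have $\Tor$ analogues, cf.\ Lemma~\ref{lm-tor-adj}, and a cap-product action of $E^*$ on $\Tor$ would furnish the comparison map needed to collapse the spectral sequence). But it reproduces a substantial argument in order to sidestep a difficulty that the single extra variable $V$ dissolves for free. The paper's proof is literally your first paragraph with $\kk$ replaced by $V$, followed by ``Yoneda''.
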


\subsection*{$\Tor$ between non additive functors.} Corollary \ref{cor-intro} should be thought as a computation of $\Tor$ in the category of all functors, based on the simpler computation of $\Tor$ in the category of additive functors. 

In section \ref{sec-comput-tor}, we explain how to combine corollary \ref{cor-intro} with standard techniques of computations  in functor categories to compute $\Tor$ between certain non-additive functors. We give an elementary illustration of this in the next example. Let $V$ be a finite-dimensional $\kk$-linear representation of $\Si_d$, and let $F_V$ be the endofunctor of vector spaces defined by 
$F_V(E)=E^{\otimes d}\otimes_{\Si_d} V$, where $\Si_d$ acts on $E^{\otimes d}$ by permuting the factors of the tensor product. We extend $F_V$ to an endofunctor $\widetilde{F}_V$ of graded vector spaces by letting $\Si_d$ act on the tensor product of graded vector spaces with a Koszul sign. 

\begin{exintro}\label{ex-intro}
Let $\kk$ be a perfect field of positive characteristic $p$ and let $\A$ be an $\Fp$-linear additive category. Assume that $d$ is invertible in $\kk$. For all additive functors $\pi$ and $\rho$, there is a graded isomorphism of vector spaces
\[\Tor_*^{\kk[\A]}(\pi^*S^d,\rho^*F_V)\simeq \widetilde{F}_V( \,\Tor_*^{\kk\otimes_{\mathbb{Z}}\A}(\pi,\rho)\otimes_\kk T_*\,)\;,\]
where $S^d$ denotes the $d$th symmetric power functor.
\end{exintro} 
If $d=1$, this example gives back corollary \ref{cor-intro}, whereas the functors $\pi^*S^d$ and $\rho^*F_V$ are not additive if $d\ge 2$: they are polynomial of degree $d$ in the sense of Eilenberg and Mac Lane \cite{EML}. Additional and more general $\Tor$-computations are given in section \ref{sec-comput-tor}. However, our computations typically require that the characteristic of $\kk$ is big enough. Computations in small characteristic are much more difficult, we treat them in \cite{DT-HIII}.

\subsection*{Application to the homology of general linear groups.}
We finish this introduction by recalling one of the main motivations of the present work, namely the connection with the homology of general linear groups. 

Every functor $F:\Proj_R^\op\to \V_\kk$ determines a sequence of nested right $\kk$-linear representations $F(R^n)$ of $\GL_n(R)$, in which an element $g$ of the general linear group acts as $F(g)$ on the vector space $F(R^n)$. These actions assemble into a right $\kk$-linear action of $\GL_\infty(R)=\bigcup_{n\ge 1} \GL_n(R)$ on  $F_\infty = \bigcup_{n\ge 1} F(R^n)$. Similarly, every functor $G:\Proj_R\to \V_\kk$ determines a left $\kk$-linear representation $G_\infty$ of $\GL_\infty(R)$.

It follows from \cite{DjaR} that when the functors $F$ and $G$ are polynomial in the sense of Eilenberg and Mac Lane, there is a graded isomorphism :
 \begin{align}H_*(\GL_{\infty}(R),F_\infty\otimes_\kk G_\infty)\simeq H_*(\GL_\infty(R),\kk)\otimes_\kk \Tor_*^{\kk[\Proj_R]}(F,G)\;\;. \label{eq-iso-Dja}
\end{align}

Typical examples of polynomial functors are the functors $\pi^*S^d$ and $\rho^*F_V$ of example \ref{ex-intro}, with $\pi=h_M$ and $\rho=t_N$. The corresponding representations $(h_M^*S^d)_\infty$ and $(t_N^*F_V)_\infty$ of $\GL_\infty(R)$ are concretely described by
\[(h_M^*S^d)_\infty = S^d(M^\infty)\;,\quad (t_N^*F_V)_\infty = F_V(N^\infty) = (N^\infty)^{\otimes d}\otimes_{\Si_d}V\;,\]
where $N^\infty$ and $M^\infty$ are the standard representations of $\GL_\infty(R)$ associated to the $(R,\kk)$-bimodule $N$ and the $(\kk,R)$ bimodule $M$. In other words, $N^\infty =\bigoplus_{k>0} N_k$ is a countable direct sum of copies of $N$, and a matrix $[r_{ij}]$ of $\GL_\infty(R)$ acts as the endomorphism of $N^\infty$ whose component $N_j\to N_i$ is left multiplication by $r_{ij}$; similarly, $M^\infty =\bigoplus_{k>0} M_k$ is a countable direct sum of copies of $M$, and a matrix $[r_{ij}]$ of $\GL_\infty(R)$ acts as the endomorphism of $M^\infty$ whose component $M_j\to M_i$ is right multiplication by $r_{ji}$.
Combining example \ref{ex-intro} and isomorphism \eqref{eq-add-Tor} gives a fairly concrete computation of the $\Tor$-term  appearing in the right hand-side of \eqref{eq-iso-Dja}.
\begin{exintro}
Let $\kk$ be a perfect field of positive characteristic $p$, let $R$ be a ring of characteristic $p$, let $V$ be a $\Si_d$-module with $d<p$ and let $M$ and $N$ be a right, respectively left, $R\otimes_{\mathbb{Z}}\kk$-module. 
There is a graded isomorphism, where $T_*$ is a graded vector space which has dimension one in even degrees and dimension $0$ in odd degrees:
\[H_*(\GL_{\infty}(R),S^d(M^\infty)\otimes_\kk F_V(N^\infty))\simeq H_*(\GL_\infty(R),\kk)\otimes_\kk \widetilde{F}_V(\Tor_*^{R\otimes_{\mathbb{Z}}\kk}(M,N)\otimes_\kk T_*).\]
\end{exintro}
Further examples of computations can be obtained by combining isomorphism \eqref{eq-add-Tor}, corollary \ref{cor-intro} and the computations of section \ref{sec-comput-tor}.

\section{Notation}

Throughout the article, $\kk$ is a field, and $\A$ is an additive category \cite[VIII.2]{ML} which is essentially small.

We denote by $\V_\kk$ the category of $\kk$-vector spaces and $\kk$-linear maps, and if $R$ is a ring, we denote by $\Proj_R$ the category of finitely projective right $R$-modules and $R$-linear maps. Thus $\Proj_\kk$ is the category of finite-dimensional $\kk$-vector spaces.

We let $\F(\A,\kk)$ be the category of all functors from $\A$ to $\V_\kk$ and natural transformations. (Note that there is a \emph{set} of natural transformations between two functors because $\A$ is essentially small.) This is a bicomplete abelian category with enough injectives and projectives. We let $\add(\A,\kk)$ be the full subcategory of $\F(\A,\kk)$ on the additive functors. This category is stable under limits and colimits, and also has enough projectives and injectives. We refer the reader to \cite{Mi72} and \cite{Pira-Pan} for an introduction to homological algebra in these functor categories.

We let $\PP_\kk$ be the category of strict polynomial functors of bounded degree over $\kk$ as in \cite{FS}. Although this category does not appear in the statement of theorem~\ref{thm-intro-3} it plays a role in its proof. Strict polynomial functors over $\kk$ are highly structured functors $F:\Proj_\kk\to \V_\kk$, and there is an exact forgetful functor $\PP_\kk\to \F(\Proj_\kk,\kk)$ which forgets the "strict polynomial structure". If $\kk$ is an infinite field, this forgetful functor is fully faithful.

\section{$\aleph$-additive envelopes}\label{sect-env}
Our proof of theorem \ref{thm-intro-3} will rely on a variation of the classical notion of the additive envelope of a category. 
 
Let $\aleph$ be a regular cardinal and let $k$ be a commutative ring. A $k$-linear category is \emph{$\aleph$-additive} if it has all \emph{$\aleph$-direct sums}, that is, if all direct sums indexed by sets of cardinality less than $\aleph$ exist. A $k$-linear functor between two such categories is \emph{$\aleph$-additive} if it preserves $\aleph$-direct sums. 
\begin{defi}\label{def-kappa-completion}
Given a $k$-linear category $\K$ with null object, we let $\K^\aleph$ be the category 
whose objects are the families of objects of $\K$ indexed by sets of cardinality less or equal to $\aleph$. Such an object is denoted by a formal direct sum $\bigoplus_{i\in\I}x_i$. The morphisms $f:\bigoplus_{j\in\mathcal{J}}x_j\to \bigoplus_{i\in\mathcal{I}}y_i$ are the `matrices' $[f_{ij}]_{(i,j)\in \I\times\mathcal{J}}$ with entries $f_{ij}\in \K(x_j,y_i)$ and such that for all $j_0$ only a finite number of morphisms $f_{ij_0}$ are nonzero. The composition of morphisms is given by matrix multiplication.

We identify $\K$ with the full subcategory of $\K^{\aleph}$ on the formal direct sums with only one object, and we let $\iota:K\hookrightarrow \K^\aleph$ be the inclusion. The category $\K^{\aleph}$ is called the \emph{$\aleph$-additive envelope} of $\K$.
\end{defi}
The definition of morphisms in $\K^\aleph$ shows that the formal direct sum $\bigoplus_{i\in\I}x_i$ is the categorical coproduct of the $x_i$ in $\K^\aleph$, which justifies the direct sum notation; it is also the categorical product if $\I$ is finite. Point \eqref{item-n2} of the next elementary proposition justifies the name given to $\K^\aleph$.
\begin{pr}\label{pr-elt-prop}
The category $\K^\aleph$ is $k$-linear and $\aleph$-additive. Moreover:
\begin{enumerate}
\item\label{item-n1} An object $x$ of $\K^\aleph$ is isomorphic to a finite direct sum of objects of $\K$ if and only if $\K^\aleph(x,-):\K^\aleph\to k\Md$ is $\aleph$-additive, 
\item\label{item-n2} Every $k$-linear functor $F:\K\to \LL$ with $\aleph$-additive codomain extends to a unique (up to isomorphism) $\aleph$-additive functor $F^\aleph:\K^\aleph\to \LL$.
\item\label{item-n3} If $\K$ is essentially small, then $\K^\aleph$ is essentially small.
\end{enumerate}
\end{pr}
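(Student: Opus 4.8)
The plan is to prove all the assertions by directly unwinding the `matrix' description of the morphisms of $\K^\aleph$; none of the steps is conceptually hard, and the only recurring subtlety is the cardinal bookkeeping, which is where regularity of $\aleph$ is used. For the opening assertion, the $k$-module structure on $\K^\aleph(x,y)$ is the entrywise one on matrices, and composition, being matrix multiplication with entries composed in $\K$, is well defined (column-finiteness is preserved, the relevant sums being finite) and $k$-bilinear. Given a family $(x^{(s)})_{s\in S}$ of objects of $\K^\aleph$ with $S$ of cardinality $<\aleph$, writing $x^{(s)}=\bigoplus_{i\in\I_s}x^{(s)}_i$ one sets $\bigoplus_{s}x^{(s)}:=\bigoplus_{(s,i)}x^{(s)}_i$ over $\coprod_{s}\I_s$; this index set is again admissible (here $\aleph$ regular), and the description of morphisms shows at once that this is the coproduct of the $x^{(s)}$ in $\K^\aleph$. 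In particular $\bigoplus_{i\in\I}x_i$ is the coproduct of its summands (a biproduct when $\I$ is finite), so $\K^\aleph$ is $\aleph$-additive and $\iota$ identifies $\K$ with the subcategory of one-term sums.

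For \eqref{item-n2} I would fix a $k$-linear $F\colon\K\to\LL$ with $\LL$ $\aleph$-additive, choose once and for all a direct sum in $\LL$ for every family of the allowed size, and put $F^\aleph\bigl(\bigoplus_{i}x_i\bigr):=\bigoplus_{i}F(x_i)$. For a morphism $f=[f_{ij}]\colon\bigoplus_{j}x_j\to\bigoplus_{i}y_i$, column-finiteness lets each column $(F(f_{ij}))_i$ be viewed as a morphism $F(x_j)\to\bigoplus_i F(y_i)$ through a finite sub-biproduct, and the universal property of the coproduct $\bigoplus_j F(x_j)$ assembles these into $F^\aleph(f)$. One then checks, on matrix entries, that $F^\aleph$ is a $k$-linear functor with $F^\aleph\iota=F$ which preserves $\aleph$-direct sums. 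For uniqueness, note that every object of $\K^\aleph$ is an $\aleph$-direct sum of objects of $\K$ and every morphism is recovered from its entries through the coproduct inclusions and the projections of the finite sub-biproducts; hence an $\aleph$-additive $G$ with $G\iota\cong F$ carries exactly the same data as $F^\aleph$, and the isomorphism $G\iota\cong F$ extends uniquely to a natural isomorphism $G\cong F^\aleph$. I expect this last point — checking that $F^\aleph$ really is functorial and that the comparison $G\cong F^\aleph$ is natural — to be the most tedious (though routine) part of the whole proof.

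For \eqref{item-n1}: if $x\cong x_1\oplus\dots\oplus x_n$ with $x_\ell\in\K$, then $\K^\aleph(x,-)\cong\bigoplus_{\ell=1}^n\K^\aleph(x_\ell,-)$, so it suffices that $\K^\aleph(z,-)$ be $\aleph$-additive for $z\in\K$; but a morphism from $z$ into $\bigoplus_{(s,i)}y^{(s)}_i$ is simply a finitely supported family of morphisms $z\to y^{(s)}_i$, and a finitely supported family over $\coprod_s\I_s$ is the same thing as a finitely supported family over $S$ of finitely supported families over the $\I_s$ — which is precisely the assertion that $\K^\aleph(z,-)$ commutes with $\aleph$-direct sums. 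Conversely, suppose $\K^\aleph(x,-)$ is $\aleph$-additive with $x=\bigoplus_{i\in\I}x_i$; if infinitely many $x_i$ were nonzero, I would pick a countably infinite $W\subseteq\I$ with $x_i\ne 0$ for $i\in W$ and split off $x'=\bigoplus_{i\in W}x_i$ as a direct summand of $x$, so that $\K^\aleph(x',-)$, being a retract of $\K^\aleph(x,-)$, is again $\aleph$-additive (retracts of isomorphisms are isomorphisms); evaluating it on $(x_i)_{i\in W}$ shows that $\id_{x'}$ corresponds to the family of projections $(\pi_i)_{i\in W}$, each nonzero since $x_i\ne 0$, contradicting finite support. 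Hence $x\cong\bigoplus_{i\in Z}x_i$ with $Z=\{i:x_i\ne 0\}$ finite, a finite direct sum of objects of $\K$.

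Finally, for \eqref{item-n3}, I would fix a set $R$ of representatives for the isomorphism classes of objects of $\K$. After reindexing the summation set by a cardinal and replacing each summand by its representative, every object of $\K^\aleph$ is isomorphic to one of the form $\bigoplus_{\alpha<\lambda}r_{\varphi(\alpha)}$ with $\lambda$ a cardinal of the allowed size and $\varphi\colon\lambda\to R$ a function; since there is only a set of such pairs $(\lambda,\varphi)$, the category $\K^\aleph$ is svelte.
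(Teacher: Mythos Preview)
Your proof is correct and follows essentially the same approach as the paper. The one place where the paper is more direct is the converse of (1): rather than passing to a countable sub-sum $x'$ and arguing by contradiction, the paper applies the assumed $\aleph$-additivity of $\K^\aleph(x,-)$ immediately to the decomposition $x=\bigoplus_i x_i$ itself, so that $\id_x\in\K^\aleph(x,\bigoplus_i x_i)\simeq\bigoplus_i\K^\aleph(x,x_i)$ factors through a finite sub-sum---which is precisely the argument you end up running on $x'$ anyway.
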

\begin{proof}
\eqref{item-n1} The objects of $\K$ (hence their finite direct sums) are $\aleph$-additive by the definition of morphisms in $\K^\aleph$. Conversely, if $x=\bigoplus x_i$ is an object such that $\K^\aleph(x,-)$ is $\aleph$-additive,  the isomorphism $\K^\aleph(x,\bigoplus x_i)\simeq\bigoplus \K^\aleph(x,x_i)$ shows that $\id_x$ factors through a finite direct sum of the $x_i$, hence $x$ is isomorphic to a finite direct sum of objects of $\K$. 

\eqref{item-n2} For all objects $x=\bigoplus x_i$ we choose a direct sum $\bigoplus F(x_i)$ in $\LL$. The assignment $F^\aleph(x)=\bigoplus F(x_i)$ defines an $\aleph$-additive functor such that $F^\aleph\circ \iota = F$. Uniqueness follows from the fact that given any pair of $\aleph$-additive functors $F',G':\K^\aleph\to \LL$, every natural transformation $\theta$ between their restrictions to $\K$ extends uniquely into a natural transformation $\theta':F'\to G'$. To be more specific, for all formal direct sums $\bigoplus x_i$, the map $\theta'_{\bigoplus x_i}$ is the composition
\[ \textstyle F'(\bigoplus x_i) \xleftarrow[]{\simeq} \bigoplus F(x_i) \xrightarrow[]{\bigoplus \theta_{x_i}}\bigoplus G(x_i) \xrightarrow[]{\simeq}G'(\bigoplus x_i)\;.\]
\eqref{item-n3} The set of isomorphism classes of $\K^\aleph$ has cardinal less than or equal to that of $\K$ and $\aleph$.
\end{proof}

For example, using the universal property \eqref{item-n2}, $\Proj_\kk^\aleph$ is easily seen to be equivalent to the category of $\kk$-vector spaces of dimension less than $\aleph$.

We now come to the usefulness of additive envelopes when computing $\Ext$ in functor categories. We are interested in the case where $k=\kk$ is a field and $\K=\A$. 

Let $F:\A\to \V_\kk$ be a functor, possibly non-additive. 
If $x=\bigoplus_{i\in I} x_i$ is an object of $\A^\aleph$, then the left Kan extension of $F$ along $\iota:\A\to \A^\aleph$ can be computed by 
\[\Lan_\iota F(x)=\colim_{y\in \T}F(y)\;,\]
where $\T$ denotes the poset of finite direct sums $\bigoplus_{i\in J}x_i$, ordered by canonical inclusions. (Indeed $\T$ is cofinal in the comma category $\K\downarrow x$.) This implies that if $F$ is additive then $\Lan_\iota F$ is isomorphic to $F^\aleph$, hence additive. Since $\T$ is filtered, this also implies that $\Lan_\iota:\F(\A,\kk)\to \F(\A^\aleph,\kk)$ is exact. The following result follows by standard homological algebra (see e.g. \cite[Lemma 1.4(v)]{Pira-Pan}).
\begin{pr}\label{pr-enlarge}
For all functors $F:\A\to \V_\kk$ and $G:\A^\aleph\to \V_\kk$, restriction along $\iota:\A\hookrightarrow \A^\aleph$ yields a graded isomorphism:
\[\Ext^*_{\F(\A^\aleph,\kk)}(\Lan_\iota F,G)\simeq  \Ext^*_{\F(\A,\kk)}(F,G\circ\iota)\;.\]
Moreover, if $F$ and $G$ are additive then a similar isomorphism also holds for $\Ext$ computed in the respective categories of additive functors.
\end{pr}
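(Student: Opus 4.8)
The plan is to deduce the statement from the general principle that a left Kan extension along an exact restriction functor is computed on projective resolutions. Write $\iota^*\colon\F(\A^\aleph,\kk)\to\F(\A,\kk)$, $G\mapsto G\circ\iota$, for the restriction functor. I would first record the basic adjunction $\Lan_\iota\dashv\iota^*$: since $\V_\kk$ is cocomplete the pointwise left Kan extension exists, and its universal property is precisely a natural isomorphism $\Hom_{\F(\A^\aleph,\kk)}(\Lan_\iota F,G)\cong\Hom_{\F(\A,\kk)}(F,G\circ\iota)$, which is the degree-zero case of what we want. Next I would record the two exactness facts that make everything run: $\iota^*$ is exact because kernels and cokernels in functor categories are computed objectwise, and $\Lan_\iota$ is exact because, as observed just before the statement, it is computed at each object of $\A^\aleph$ by a filtered colimit.

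The homological input is then routine. Being left adjoint to the exact functor $\iota^*$, the functor $\Lan_\iota$ sends projectives of $\F(\A,\kk)$ to projectives of $\F(\A^\aleph,\kk)$, since for $P$ projective $\Hom_{\F(\A^\aleph,\kk)}(\Lan_\iota P,-)\cong\Hom_{\F(\A,\kk)}(P,\iota^*(-))$ is a composite of exact functors. So if $P_\bullet\to F$ is a projective resolution in $\F(\A,\kk)$, exactness of $\Lan_\iota$ makes $\Lan_\iota P_\bullet\to\Lan_\iota F$ a projective resolution in $\F(\A^\aleph,\kk)$; computing $\Ext^*_{\F(\A^\aleph,\kk)}(\Lan_\iota F,G)$ from it and applying the adjunction isomorphism in each cochain degree (it is natural, hence commutes with the differentials) identifies the result with $H^*\bigl(\Hom_{\F(\A,\kk)}(P_\bullet,G\circ\iota)\bigr)=\Ext^*_{\F(\A,\kk)}(F,G\circ\iota)$. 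That the left-hand side even makes sense, i.e.\ that $\F(\A^\aleph,\kk)$ has enough projectives, follows from $\A^\aleph$ being svelte (Proposition \ref{pr-elt-prop}\eqref{item-n3}).

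For the additive variant I would run the same argument inside the full abelian subcategories $\add(\A,\kk)\subseteq\F(\A,\kk)$ and $\add(\A^\aleph,\kk)\subseteq\F(\A^\aleph,\kk)$. One uses that $\Lan_\iota F\cong F^\aleph$ is additive when $F$ is, that $\iota^*$ clearly preserves additivity, and that $\add(-,\kk)$ is closed under limits and colimits in $\F(-,\kk)$, so that exactness of $\iota^*$ and of $\Lan_\iota$, the adjunction isomorphism (which restricts because $\add$ is full), and the existence of enough projectives all carry over. The only point requiring a moment's care — and the nearest thing to an obstacle in an otherwise formal argument — is to be sure that the left Kan extension and its formal properties genuinely restrict to the additive subcategories rather than needing to be rebuilt there; this is immediate from fullness of $\add$ together with the identification $\Lan_\iota F\cong F^\aleph$. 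This is the "standard homological algebra" alluded to above, cf.\ \cite[Lm 1.4(v)]{Pira-Pan}.
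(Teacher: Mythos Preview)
Your proof is correct and is exactly the standard argument the paper has in mind: the paper does not spell out a proof at all, merely stating that the result ``follows by standard homological algebra (see e.g.\ \cite[Lm 1.4(v)]{Pira-Pan})'' after recording that $\Lan_\iota$ is exact; your write-up unpacks precisely this, down to the same reference.
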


The next proposition reveals the advantage of working with the larger category $\A^\aleph$.
\begin{pr}\label{pr-transport}
Assume that for all pairs $(a,b)$ of objects of $\A$, $\A(a,b)$ is an $\Fp$-vector space of cardinal less than $\aleph$. Then for all $a$ in $\A$, the functor $\A^\aleph(a,-):\A^\aleph\to \Proj_\Fp^\aleph$ has a left adjoint $a\otimes -$. Moreover, for all functors $F:\Proj_\Fp\to \V_\kk$ and $G:\A^\aleph\to \V_\kk$, there is a graded isomorphism:
\[\Ext^*_{\F(\A^\aleph,\kk)}(F\circ \A^\aleph(a,-),G)\simeq \Ext^*_{\F(\Proj_\Fp,\kk)}(F,G\circ (a\otimes-))\;. \]  
which is induced by precomposition by $a\otimes -$ and restriction along the unit of adjunction $x\mapsto \A(a,a\otimes x)$. If $F$ and $G$ are additive then a similar isomorphism also holds for $\Ext$ computed in the categories of additive functors.
\end{pr}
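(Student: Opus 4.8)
The plan is to realise both $\Ext$-groups appearing in the statement as $\Ext$-groups in the single functor category $\F(\Proj_\Fp^\aleph,\kk)$, by combining an adjunction at the level of categories with Proposition~\ref{pr-enlarge}. First I construct the left adjoint. By hypothesis $\A(a,x)$ has cardinal $<\aleph$, and if $x=\bigoplus_{i\in I}x_i$ the index set $I$ has cardinal $\le\aleph$, so, using that $\aleph$ is regular, $\A^\aleph(a,\bigoplus_{i\in I}x_i)=\bigoplus_{i\in I}\A(a,x_i)$ is a formal direct sum of at most $\aleph$ copies of $\Fp$, i.e.\ an object of $\Proj_\Fp^\aleph$; thus $\A^\aleph(a,-)\colon\A^\aleph\to\Proj_\Fp^\aleph$ is the $\aleph$-additive extension of $\A(a,-)\colon\A\to\Proj_\Fp^\aleph$ and in particular preserves $\aleph$-direct sums. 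I then define $a\otimes-\colon\Proj_\Fp^\aleph\to\A^\aleph$ to be the unique $\aleph$-additive extension (Proposition~\ref{pr-elt-prop}\eqref{item-n2}) of the $\Fp$-linear functor $\Proj_\Fp\to\A^\aleph$ sending $\Fp^n$ to $a^n$, and I check the natural isomorphism $\A^\aleph(a\otimes V,x)\cong\Hom_\Fp(V,\A^\aleph(a,x))$ as follows: as contravariant functors of $V$, both sides turn $\aleph$-direct sums into direct products---the left-hand one because $a\otimes-$ is $\aleph$-additive and $\A^\aleph(-,x)$ sends $\aleph$-coproducts to products---and they agree for $V=\Fp$, so writing $V$ as an $\aleph$-direct sum of copies of $\Fp$ yields the isomorphism, naturality in $V$ and in $x$ being a routine check with matrices of morphisms. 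The unit of this adjunction is the map $x\to\A^\aleph(a,a\otimes x)$ induced by the unit $\Fp\to\A(a,a)$ of the $\Fp$-algebra $\A(a,a)$.

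Next I pass to functor categories. Precomposition gives exact functors $(a\otimes-)^*\colon\F(\A^\aleph,\kk)\to\F(\Proj_\Fp^\aleph,\kk)$ and $\A^\aleph(a,-)^*\colon\F(\Proj_\Fp^\aleph,\kk)\to\F(\A^\aleph,\kk)$, and the crucial point is that $\A^\aleph(a,-)^*$ is the \emph{left} adjoint of $(a\otimes-)^*$. This is not automatic from the existence of the adjunction $a\otimes-\dashv\A^\aleph(a,-)$: it follows from the general fact that, for an adjunction $L\dashv R$ with counit $\varepsilon$, the comma category $L\downarrow y$ has $(Ry,\varepsilon_y)$ as terminal object, so the left Kan extension along $L$ is computed pointwise by evaluation at this terminal object, i.e.\ $\Lan_L\cong(-)\circ R$. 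Applied with $L=(a\otimes-)$ and $R=\A^\aleph(a,-)$ this gives $\A^\aleph(a,-)^*\cong\Lan_{a\otimes-}$, which is left adjoint to $(a\otimes-)^*$ by the definition of left Kan extension, with adjunction unit given by restriction along the unit $x\mapsto\A^\aleph(a,a\otimes x)$. Since its right adjoint $(a\otimes-)^*$ is exact, $\A^\aleph(a,-)^*$ preserves projectives; being itself exact it carries a projective resolution of any $F'\colon\Proj_\Fp^\aleph\to\V_\kk$ to a projective resolution of $F'\circ\A^\aleph(a,-)$, whence for every $G\colon\A^\aleph\to\V_\kk$ one gets $\Ext^*_{\F(\A^\aleph,\kk)}(F'\circ\A^\aleph(a,-),G)\cong\Ext^*_{\F(\Proj_\Fp^\aleph,\kk)}(F',G\circ(a\otimes-))$.

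Finally I specialise to $F'=\Lan_\iota F$ for $\iota\colon\Proj_\Fp\hookrightarrow\Proj_\Fp^\aleph$. Then $F'\circ\A^\aleph(a,-)$ is exactly the functor written $F\circ\A^\aleph(a,-)$ in the statement (with the tacit extension of $F$ along $\iota$), while $(G\circ(a\otimes-))\circ\iota=G\circ(a\otimes-)$ because $(a\otimes-)\circ\iota$ is the functor $\Fp^n\mapsto a^n$. Proposition~\ref{pr-enlarge} gives $\Ext^*_{\F(\Proj_\Fp^\aleph,\kk)}(\Lan_\iota F,G\circ(a\otimes-))\cong\Ext^*_{\F(\Proj_\Fp,\kk)}(F,G\circ(a\otimes-))$, and composing this with the isomorphism of the previous paragraph yields the claim. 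Unwinding the two isomorphisms (the Kan-extension adjunction bijection just recalled, plus the restriction isomorphism of Proposition~\ref{pr-enlarge}) shows the resulting map is precomposition by $a\otimes-$ followed by restriction along the unit $x\mapsto\A(a,a\otimes x)$ and along $\iota$, as asserted. The additive version is obtained verbatim: $\A^\aleph(a,-)$ and $a\otimes-$ are additive, so all the restriction functors, the adjunction $\A^\aleph(a,-)^*\dashv(a\otimes-)^*$, and the preservation of projectives restrict to the subcategories of additive functors, and one invokes the additive part of Proposition~\ref{pr-enlarge}.

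I expect the only genuinely delicate point to be the variance issue flagged in the second paragraph---being certain that precomposition with $\A^\aleph(a,-)$, and not with $a\otimes-$, is the \emph{left} adjoint, which rests on the comma-category computation ($L\downarrow y$ having a terminal object) rather than on the mere existence of $a\otimes-\dashv\A^\aleph(a,-)$. Everything else is either a concrete computation with matrices of morphisms (the construction and the adjunction of the first paragraph) or a formal consequence of Propositions~\ref{pr-elt-prop} and~\ref{pr-enlarge}; the one remaining bookkeeping point, the identification of ``$F\circ\A^\aleph(a,-)$'' with $(\Lan_\iota F)\circ\A^\aleph(a,-)$, is settled by the last paragraph.
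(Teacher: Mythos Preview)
Your argument is correct and follows essentially the same line as the paper's proof, only with the black boxes unpacked. The paper constructs the left adjoint by exhibiting the unit $v\to\A^\aleph(a,a\otimes v)$ as an initial object in the comma category $v\downarrow\A^\aleph(a,-)$ and citing \cite[Lm~4.6.1]{Riehl}, whereas you build $a\otimes-$ as the $\aleph$-additive extension of $\Fp^n\mapsto a^n$ and verify the hom-set bijection directly; these are equivalent. For the $\Ext$-isomorphism the paper simply invokes \cite[Lm~1.5]{Pira-Pan}, while you spell out its content: the identification $\Lan_{a\otimes-}\cong\A^\aleph(a,-)^*$ via the terminal object of $L\downarrow y$, the resulting adjunction $\A^\aleph(a,-)^*\dashv(a\otimes-)^*$, and preservation of projectives. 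Your extra step through $\F(\Proj_\Fp^\aleph,\kk)$ and Proposition~\ref{pr-enlarge} is a welcome clarification of a point the paper leaves implicit, namely that $F\circ\A^\aleph(a,-)$ really means $(\Lan_\iota F)\circ\A^\aleph(a,-)$.
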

\begin{proof}

Let $v$ be a $\Fp$-vector space with basis $(b_i)_{i\in I}$. We let $v\otimes x:=\bigoplus_{i\in I}x_i$ where each $x_i$ denotes a copy of $x$. The $\Fp$-linear map $v\mapsto \A^\aleph(x,v\otimes x)$, sending $b_i$ to the canonical inclusion of $x$ as the factor $x_i$ of $v\otimes x$ is initial in $v\downarrow \A^\aleph(x,-)$, and the existence of the adjoint follows from \cite[Lemma 4.6.1]{Riehl}. The $\Ext$-isomorphism is then given by \cite[Lemma 1.5]{Pira-Pan}.
\end{proof}

\section{Recollections on Frobenius twists}\label{subsec-Frobenius}
Assume that the field $\kk$ is perfect, of positive characteristic $p$.
For all integers $r$, the $r$-th Frobenius twist of a vector space $v$ is the  vector space $v^{(r)}$ which equals $v$ as an abelian group, with action of $\kk$ given by 
$\lambda\cdot x:= \lambda^{p^{-r}}x$.
This construction, which may also be seen as \textit{tensoring} with $\kk$ with action twisted by $\lambda\mapsto\lambda^{p^r}$, is natural with respect to the vector space $v$. If $r\ge 0$ then the $r$-th Frobenius twist is the underlying functor of a strict polynomial functor $I^{(r)}$. 

We denote by $E_r^*$ the graded $\kk$-algebra of self-extensions of $I^{(r)}$ in strict polynomial functors. This algebra is computed in \cite[Theorem 4.10]{FS} (with a slightly different notation for the generators):
\[E_r^*=\Ext^*_{\mathcal{P}_\kk}(I^{(r)},I^{(r)})\simeq \kk[e_1,\dots,e_r]/\langle e_1^p =e_2^p=\dots = e_r^p=0\rangle\;,\]
where $e_i$ is a class of cohomological degree $2p^{i-1}$. In particular, as a graded vector space, $E^*_r$ has dimension one in degree $2i$ for $0\le i<p^r$, and it is zero in other degrees. 
Precomposition by the Frobenius twist $I^{(1)}$ yields a morphism of graded $\kk$-algebras:
$ E_r^*\to E_{r+1}^*.$
It is shown in \cite[Corollary 4.9]{FS} that this map is injective. For dimension reasons, it is an isomorphism in degrees less than $2p^r$. We denote by $E_\infty^*$ the colimit of the $E_r^*$. Thus, as graded $\kk$-algebras we have
\[E_\infty^*\simeq \kk[e_i, i\ge 1]/\langle e_i^p =0 \text{ for all $i\ge 1$}\rangle\]
and the canonical map $E_r^*\to E_\infty^*$ identifies $E_r^*$ with the subalgebra of $E_\infty^*$ generated by the classes $e_1,\dots,e_r$. 

For all $r\ge 0$, we define a morphism of graded $\kk$-algebras $\alpha_r^\kk$ as the composition of the $\Ext$-map induced by the forgetful functor $\PP_\kk\to \F(\Proj_\kk,\kk)$ with the $\Ext$-map induced by precomposition by the $(-r)$-th Frobenius twist (here we need that $\kk$ is perfect!):
\[\alpha_r^\kk: E_r^*=\Ext^*_{\mathcal{P}_\kk}(I^{(r)},I^{(r)})\to \Ext^*_{\F(\Proj_\kk,\kk)}(I^{(r)},I^{(r)})\to \Ext^*_{\F(\Proj_\kk,\kk)}(I,I)\;.\]
Passing to the colimit yields a morphism of graded $\kk$-algebras: 
\[\alpha_\infty^\kk:E_\infty^*\to \Ext^*_{\F(\Proj_\kk,\kk)}(I,I)\;.\] 
The following result is a special case of \cite[Theorem 3.10]{FFSS}. It can also be proved directly by comparing \cite{FLS} (which computes the target of $\alpha^{\FF_p}_r$) and \cite[§\,4]{FS} (which computes the source of $\alpha^{\FF_p}_r$ by following the methods of \cite{FLS}, adapted to $\Pp$, making easy to compare both sides). 
\begin{thm}[\cite{FLS,FS}]\label{thm-FFLSS}
If $\kk=\Fp$, the map $\alpha_\infty^\Fp$ is an isomorphism. Equivalently, for all integers $r\ge 0$, the maps $\alpha_r^\Fp$ are isomorphisms in degrees less than $2p^r$.
\end{thm}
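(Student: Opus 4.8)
The plan is to derive the statement from the two explicit computations it refers to, reducing it to a matching of algebra generators. First I would record that, as graded $\Fp$-algebras, both $E^*$ (by \cite[Thm 0.1]{FLS}) and $E_\infty^*=\colim_r E_r^*$ (by \cite[Thm 4.10]{FS} together with \cite[Cor 4.9]{FS}) are isomorphic to $\Fp[e_1,e_2,\dots]/\langle e_i^p\rangle$ with $|e_i|=2p^{i-1}$; in particular both are connected graded of finite type and have the same Poincar\'e series, of dimension $1$ in each even degree and $0$ in each odd degree (since $n/2$ has a unique base-$p$ expansion). Using that $E_r^*\to E_\infty^*$ is an isomorphism in degrees below $2p^r$ and that $\alpha_\infty^\Fp$ composed with it is $\alpha_r^\Fp$, the two forms of the theorem are equivalent, so it suffices to prove that $\alpha_\infty^\Fp$ is an isomorphism. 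By the equality of Poincar\'e series this is equivalent to $\alpha_\infty^\Fp$ being surjective; and since $\alpha_\infty^\Fp$ is a morphism of graded algebras, $E^*$ is generated by the $e_i$, and each $E^{2p^{i-1}}$ is one-dimensional (spanned by $e_i$), surjectivity is in turn equivalent to the single assertion that $\alpha_\infty^\Fp$ is nonzero in every degree $2p^{i-1}$.

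It thus remains to show that, for $r\ge i$, the map $\alpha_r^\Fp\colon\Ext^{2p^{i-1}}_{\PP_\Fp}(I^{(r)},I^{(r)})\to\Ext^{2p^{i-1}}_{\F(\Proj_\Fp,\Fp)}(I,I)$ is nonzero, equivalently injective (its source being one-dimensional). Here I would use that over $\Fp$ the Frobenius twist of a vector space is the identity, so that $\alpha_r^\Fp$ is simply the map on $\Ext$ induced by the exact forgetful functor $U\colon\PP_\Fp\to\F(\Proj_\Fp,\Fp)$, and that the transition maps $E_r^*\hookrightarrow E_{r+1}^*$ are compatible with the $\alpha_r^\Fp$ because $U$ carries the strict Frobenius twist $I^{(1)}$ to the identity functor. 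In \cite{FS} the generator $e_i$ of $E_r^*$ is produced, up to a scalar, from the single degree-$2$ class $e_1\in\Ext^2_{\PP_\Fp}(I^{(1)},I^{(1)})$ by the twist-and-hypercohomology machinery --- precomposition with Frobenius twists together with the collapse of the relevant spectral sequences --- and each such operation commutes with $U$, since $U$ is exact, symmetric monoidal and respects Frobenius twists. One is therefore reduced to the base case $i=1$: that $e_1$ has nonzero image in $\Ext^2_{\F(\Proj_\Fp,\Fp)}(I,I)\cong\Fp$, which is classical and can be checked by a direct computation with an explicit small model for $\Ext^2_{\PP_\Fp}(I^{(1)},I^{(1)})$ together with \cite[Thm 0.1]{FLS}. (Alternatively, the whole of this step is the case $F=G=I$ of the general comparison theorem \cite[Thm 3.10]{FFSS}, which one may simply quote.)

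I expect the formal part --- the Poincar\'e-series bookkeeping and the reduction to the generators --- to be routine, but it cannot prove the theorem on its own: the algebra structure yields no relation between $\alpha_r^\Fp(e_i)$ and $\alpha_r^\Fp(e_j)$ for $i\neq j$, the $e_i$ being new algebra generators rather than polynomials in the lower ones, so genuinely new input is needed in every degree $2p^{i-1}$. The real obstacle is therefore the non-vanishing of $\alpha_\infty^\Fp$ on the Friedlander--Suslin generators; this is the one point where one must open up, rather than merely invoke, the computations of \cite{FLS} and \cite{FS}. Granting that input, the reductions above upgrade it to surjectivity, hence bijectivity, of $\alpha_\infty^\Fp$, and thereby to the equivalent statement that each $\alpha_r^\Fp$ is an isomorphism in degrees below $2p^r$.
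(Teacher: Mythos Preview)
The paper does not give a proof of this theorem at all: it simply records it as a special case of \cite[Thm 3.10]{FFSS}, noting that it can alternatively be obtained ``directly by comparing the computations of \cite{FLS} and \cite{FS}''. Your proposal is precisely an attempt to flesh out what that direct comparison would look like, so the approaches are not different in spirit.

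Your formal reductions are correct: over $\Fp$ the underlying ordinary functor of $I^{(r)}$ is $I$, both $E_\infty^*$ and $E^*$ have the same graded dimension, so bijectivity is equivalent to surjectivity, which (since $\alpha_\infty^{\Fp}$ is an algebra map and $E^*$ is generated by the classes in degrees $2p^{i-1}$) is equivalent to non-vanishing in each degree $2p^{i-1}$. You also correctly identify that this non-vanishing is the entire content of the statement and cannot be extracted from the Poincar\'e series alone. Where your sketch is weakest is the proposed reduction to the case $i=1$: the claim that the Friedlander--Suslin generators $e_i$ are obtained from $e_1$ ``by the twist-and-hypercohomology machinery'' in a way that manifestly commutes with the forgetful functor is plausible but is exactly the step that requires opening up \cite{FS}, and you have not actually carried it out. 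Since you already note that the whole statement is the case $F=G=I$ of \cite[Thm 3.10]{FFSS}, the cleanest fix is the one the paper itself takes: cite that result rather than trying to rebuild its proof.
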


\section{A first $\Ext$-computation in $\F(\A,\kk)$}
In this section, $\kk$ is a perfect field of positive characteristic $p$.
Given an additive functor $\pi:\A\to\V_\kk$ and $r\in \mathbb{N}\cup \{+\infty\}$, we denote by $\pi^*_r$ the morphism of graded $\kk$-algebras defined as the composition:
\[\pi^*_r: E^*_r\xrightarrow[]{\alpha_r^\kk} \Ext^*_{\F(\Proj_\kk,\kk)}(I,I)\xrightarrow[]{\pi^*} \Ext^*_{\F(\A,\kk)}(\pi,\pi)\] 
where $\alpha_r$ is as in section \ref{subsec-Frobenius} and $\pi^*$ is induced by evaluation on $\pi$. Recall that 
\[\Phi: \Ext_{\add(\A,\kk)}^*(\pi,\rho)\to \Ext_{\F(\A,\kk)}^*(\pi,\rho)\] is the canonical map induced by the exact inclusion $\add(\A,\kk)\to\F(\A,\kk)$. The goal of this section is to prove the following result. It is the main step towards theorem \ref{thm-intro-3}, for which we will then have only to identify $E^*_\infty$.
\begin{thm}\label{thm-prelim}
Let $\kk$ be a perfect field of positive characteristic $p$.
Assume that $\A$ is $\Fp$-linear. For all additive functors $\pi,\rho:\A\to \V_\kk$, the following map is an isomorphism of graded $\kk$-vector spaces:
\[\begin{array}{cccc}
\Psi_\infty:&\Ext_{\add(\A,\kk)}^*(\pi,\rho)\otimes_\kk E^*_\infty & \to & \Ext_{\F(\A,\kk)}^*(\pi,\rho)\\
 &x\otimes y & \mapsto & \Phi(x) \circ \pi^*_\infty(y)
\end{array}\;.
\]
\end{thm}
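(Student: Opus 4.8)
The plan is to reduce the general statement to the case $\A = \Proj_{\Fp}$ via the machinery of $\aleph$-additive envelopes developed in the previous sections, and then to bootstrap from theorem \ref{thm-FFLSS} (the case $\kk = \Fp$) to arbitrary perfect $\kk$ by a base-change argument. First I would fix a regular cardinal $\aleph$ large enough that all the Hom-sets $\A(a,b)$ are $\Fp$-vector spaces of cardinality less than $\aleph$, and pass to $\A^\aleph$. By proposition \ref{pr-enlarge}, the $\Ext$-groups over $\F(\A,\kk)$ and over $\add(\A,\kk)$ on the left and right are unchanged (up to canonical isomorphism) when we replace $\pi,\rho$ by $\Lan_\iota \pi, \Lan_\iota\rho$ and work in $\F(\A^\aleph,\kk)$; moreover these Kan extensions are still additive. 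So it suffices to prove the theorem with $\A$ replaced by $\A^\aleph$, i.e. we may assume $\A$ is $\aleph$-additive.

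Next I would use proposition \ref{pr-transport}: choosing an object $a$ of $\A$ and writing $\pi = F\circ \A^\aleph(a,-)$ is not quite possible for a general additive $\pi$, so the real device is that every additive functor $\pi:\A^\aleph\to \V_\kk$ is determined by its values on representables, and the functors $\A^\aleph(a,-)$ for $a$ ranging over (a set of representatives of) objects of $\A$ form a generating family of projectives in $\add(\A^\aleph,\kk)$. The key point is that $\A^\aleph(a,-)\colon \A^\aleph \to \Proj_{\Fp}^\aleph$ lands in $\aleph$-dimensional $\Fp$-vector spaces, and the isomorphism of proposition \ref{pr-transport} identifies $\Ext^*_{\F(\A^\aleph,\kk)}(F\circ\A^\aleph(a,-), G)$ with $\Ext^*_{\F(\Proj_{\Fp},\kk)}(F, G\circ(a\otimes-))$, compatibly with the Yoneda product and with the additive sub-$\Ext$. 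Taking $F = I$ the inclusion $\Proj_{\Fp}\hookrightarrow \V_{\Fp}$ base-changed to $\kk$ (so that $I\circ \A^\aleph(a,-) = \pi$ when $\pi$ is the representable $\A^\aleph(a,-)$ tensored up to $\kk$), this reduces the computation of $\Ext^*_{\F(\A^\aleph,\kk)}(\pi,\rho)$ to $\Ext^*_{\F(\Proj_{\Fp},\kk)}(I, \rho\circ(a\otimes-))$, and one checks that the map $\Psi_\infty$ is transported to the corresponding comparison map there. A dévissage/spectral-sequence argument (filtering arbitrary additive $\rho$ by representables, using that $\Lan_\iota$ and the functors in sight are exact) then lets one pass from representables to arbitrary $\pi,\rho$, provided one knows the statement is natural and behaves well under direct sums and cokernels — here one uses that $E^*_\infty$ is a flat (indeed free) $\kk$-module so that $-\otimes_\kk E^*_\infty$ is exact and commutes with the relevant colimits.

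The heart of the matter is thus the case $\A = \Proj_{\Fp}$, $\pi = \rho = I$, $\kk$ arbitrary perfect: one must show that $E^*_\infty \simeq \Ext^*_{\F(\Proj_{\Fp},\kk)}(I,I)$ via $\alpha^\kk_\infty$, equivalently that $\alpha^\kk_r$ is an isomorphism in degrees $< 2p^r$ for all $r$. For $\kk = \Fp$ this is exactly theorem \ref{thm-FFLSS}. To go from $\Fp$ to a perfect extension $\kk/\Fp$ I would use a base-change isomorphism: the category $\F(\Proj_{\Fp},\kk)$ is obtained from $\F(\Proj_{\Fp},\Fp)$ by extending scalars in the target, and since $I$ over $\kk$ is the scalar extension of $I$ over $\Fp$ and the relevant functors have projective resolutions by finitely generated (in each degree) functors, there is a Künneth-type isomorphism $\Ext^*_{\F(\Proj_{\Fp},\kk)}(I,I)\simeq \Ext^*_{\F(\Proj_{\Fp},\Fp)}(I,I)\otimes_{\Fp}\kk$ compatible with $\alpha^{(-)}_\infty$; the perfectness of $\kk$ is what makes the $(-r)$-th Frobenius twist available on the $\kk$-side so that $\alpha^\kk_r$ is even defined. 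Combining this with theorem \ref{thm-FFLSS} yields $E^* \simeq E^*_\infty$ as graded $\kk$-algebras with the degree and truncation claimed.

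The main obstacle I anticipate is the dévissage reducing arbitrary additive $\pi,\rho$ to representables while keeping track of the multiplicative/Yoneda structure: one needs the comparison map $\Psi_\infty$ to be natural in both variables \emph{and} compatible with long exact sequences, so that a five-lemma / spectral sequence comparison goes through. Concretely, the subtlety is that $\add(\A,\kk)$ and $\F(\A,\kk)$ have different projectives, so an additive projective resolution of $\pi$ is not $\F$-projective; the argument must instead resolve $\rho$ injectively (or $\pi$ projectively) within $\add$ and then feed the result through the exact functor realizing the reduction to $\Proj_{\Fp}$, invoking that $\Ext^*_{\F(\Proj_{\Fp},\kk)}(I,-)$ sends the relevant class of functors (those of the form $\rho\circ(a\otimes-)$ with $\rho$ additive) to something computed by $E^*_\infty\otimes(\text{additive data})$. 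Making this last claim precise — essentially that $I$ is, up to the "extra" classes in $E^*_\infty$, still "projective relative to additive functors" — is the technical core; everything else is bookkeeping with the adjunctions and Kan extensions already set up.
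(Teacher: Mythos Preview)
Your outline is essentially the paper's proof: handle $\A=\Proj_{\Fp}$ by base change from theorem~\ref{thm-FFLSS}, reduce the case of representable $\pi=\kk\otimes_{\Fp}\A(a,-)$ to $\Proj_{\Fp}$ via the $\aleph$-additive envelope and proposition~\ref{pr-transport}, and then pass to general $\pi$ by a spectral sequence. The paper's organization differs only in that it keeps the spectral sequence argument in $\F(\A,\kk)$ rather than in $\F(\A^\aleph,\kk)$, and your anxiety about the ``technical core'' dissolves more simply than you expect: one takes an $\add$-projective resolution $P_\bullet\to\pi$ and an $\F$-injective resolution $Q^\bullet$ of $\rho$, forms the bicomplexes $C^{p,q}=\Hom_{\add}(P_p,\rho)\otimes_\kk E_\infty^q$ and $D^{p,q}=\Hom_{\F}(P_p,Q^q)$, and observes that the induced map on $E_1$-pages is precisely $\Psi_\infty$ for the projectives $P_p$, already known to be an isomorphism. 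There is no need to ``feed the resolution through to $\Proj_{\Fp}$'' or to invoke any relative projectivity of $I$; the reduction to $\Proj_{\Fp}$ is used only once, to settle the representable case, and the comparison of spectral sequences thereafter is entirely internal to $\A$.
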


We shall prove theorem \ref{thm-prelim} in several steps.
\begin{lm}[First step]\label{lm-step-1}
Theorem \ref{thm-prelim} holds if $\A=\Proj_\Fp$.
\end{lm}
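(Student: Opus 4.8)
The plan is to reduce the statement about $\Proj_{\Fp}$ to the known computation in strict polynomial functors, namely Theorem~\ref{thm-FFLSS}, via the $\aleph$-additive envelope machinery of the previous sections. First I would observe that on $\A=\Proj_{\Fp}$, up to isomorphism the only additive functors $\pi,\rho:\Proj_{\Fp}\to\V_\kk$ are of the form $v\mapsto v\otimes_{\Fp}M$ for $\kk$-vector spaces $M$, so that $\pi=t_M$ and $\rho=t_N$; since $\Fp\otimes_{\mathbb{Z}}\kk=\kk$ is a field, $\Ext^*_{\add(\Proj_{\Fp},\kk)}(\pi,\rho)$ is concentrated in degree $0$, equal to $\Hom_\kk(M,N)$, and $\Phi$ is the obvious map. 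Thus the content of the lemma in this case is precisely the assertion that, for $\pi=\rho=I$ (the identity, corresponding to $M=N=\kk$), the map $\alpha_\infty^{\Fp}\colon E_\infty^*\to E^*$ is an isomorphism — which is exactly Theorem~\ref{thm-FFLSS} — together with a base-change argument to pass from the case $M=N=\kk$ to arbitrary finite-dimensional $M,N$, and then a colimit argument to cover arbitrary $M,N$.

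Concretely, I would first treat the case $M=N=\kk$, i.e.\ $\pi=\rho=I$. Here $\Psi_\infty$ restricts to the map $E_\infty^*\xrightarrow{\;\id\otimes\,\cdot\,}\kk\otimes_\kk E_\infty^*\xrightarrow{\;\pi^*_\infty\;}\Ext^*_{\F(\Proj_{\Fp},\kk)}(I,I)=E^*$, and by definition $\pi^*_\infty$ is $\alpha_\infty^{\Fp}$ (since $\pi=I$ makes the evaluation map $\pi^*$ the identity on $E^*$); so this case is literally Theorem~\ref{thm-FFLSS}. Next, for general finite-dimensional $M$ and $N$, I would use that $t_M=\Hom_\kk(M^\vee,-)\circ\,(-\otimes_{\Fp}?)$\,—\,more cleanly, that $t_M\cong I^{\oplus \dim M}$ and $t_N\cong I^{\oplus\dim N}$ as functors $\Proj_{\Fp}\to\V_\kk$ when $M,N$ are finite-dimensional — so both sides of $\Psi_\infty$ are obtained from the case $M=N=\kk$ by tensoring with $\Hom_\kk(M,N)=\Hom_\kk(\kk,\kk)\otimes_\kk\Hom_\kk(M,N)$, and one checks that $\Psi_\infty$ is compatible with these finite direct sum decompositions (this is a formal verification using additivity of $\Ext^*$ in each variable and the explicit formula $\Psi_\infty(e\otimes e')=\Phi(e)\circ\pi^*_\infty(e')$). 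Finally, for arbitrary $M,N$, I would write $M,N$ as filtered colimits of their finite-dimensional subspaces and use that $\F(\Proj_{\Fp},\kk)$ has enough projectives which are finitely generated (the representable functors $\Hom_{\Fp}(\Fp^n,-)$, equivalently the functors $I^{\otimes d}$ and their summands), so that $\Ext^*_{\F(\Proj_{\Fp},\kk)}(-,G)$ commutes with filtered colimits in the first variable and with filtered colimits in $G$ against a fixed finitely generated source; the right-hand side is then the filtered colimit of the finite-dimensional cases, and likewise for the (degree-$0$) left-hand side, so the isomorphism passes to the colimit.

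The main obstacle I expect is the bookkeeping in the base-change/colimit step: one must be careful that $\pi^*_\infty$ genuinely respects the identification $t_M\cong I^{\oplus\dim M}$, i.e.\ that the Yoneda-product formula defining $\Psi_\infty$ is natural in $\pi$ and $\rho$ with respect to the direct-sum inclusions and projections, and one must check that the relevant $\Ext$-groups really do commute with the filtered colimits in question (this uses that $I^{\otimes d}\colon\Proj_{\Fp}\to\V_\kk$ is a compact/finitely generated projective generator in the appropriate sense, which follows from the fact that $\Hom_{\Fp}(\Fp^n,-)$ is projective and that every functor on $\Proj_{\Fp}$ is a quotient of a sum of these). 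None of this is deep, but it is exactly the place where a clean argument is needed rather than a wave of the hand. An alternative, perhaps cleaner, route would bypass the explicit direct-sum decomposition: one notes that both sides of $\Psi_\infty$, regarded as functors of $(\pi,\rho)$ on the finite category of additive functors $\Proj_{\Fp}\to\V_\kk$, are "bilinear" (biadditive and compatible with $\Hom_\kk$-tensoring) in $\pi$ and $\rho$, hence determined by their value at $(\pi,\rho)=(I,I)$, where $\Psi_\infty=\alpha_\infty^{\Fp}$ is an isomorphism by Theorem~\ref{thm-FFLSS}; I would likely present the proof in this form, invoking Proposition~\ref{pr-enlarge} and Proposition~\ref{pr-transport} only if needed to reduce arbitrary $M,N$ to the finite-dimensional ones.
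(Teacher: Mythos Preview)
Your reduction to the case $\pi=\rho=t$ (the functor $v\mapsto v\otimes_{\Fp}\kk$) is essentially the paper's. The gap is in your treatment of that core case when $\kk\ne\Fp$.

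You claim that for $\pi=\rho=t$ (which you call $I$) the map $\pi^*_\infty$ ``is $\alpha_\infty^{\Fp}$'' and hence an isomorphism by Theorem~\ref{thm-FFLSS}. But this conflates several objects. By definition $t^*_\infty$ is the composite
\[
E^*_\infty \xrightarrow{\;\alpha_\infty^\kk\;} E^*=\Ext^*_{\F(\Proj_\kk,\kk)}(I,I) \xrightarrow{\;t^*\;} \Ext^*_{\F(\Proj_{\Fp},\kk)}(t,t)\;,
\]
whose target is the $\kk$-linear $\Ext$ over $\Proj_{\Fp}$; this is neither $E^*$ nor $\Ext^*_{\F(\Proj_{\Fp},\Fp)}(I,I)$, and the map is built from $\alpha_\infty^\kk$, not $\alpha_\infty^{\Fp}$. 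Theorem~\ref{thm-FFLSS} only asserts that $\alpha_\infty^{\Fp}$ is an isomorphism, i.e.\ it computes the $\Fp$-linear $\Ext$; it says nothing directly about $\kk$-linear coefficients. The paper closes this gap with a commutative base-change square identifying $t^*_r$ with $\kk\otimes_{\Fp}\alpha_r^{\Fp}$: the left vertical isomorphism is base change for strict polynomial functors \cite[2.7]{SFB}, and the right vertical isomorphism $\kk\otimes_{\Fp}\Ext^*_{\F(\Proj_{\Fp},\Fp)}(I,I)\simeq\Ext^*_{\F(\Proj_{\Fp},\kk)}(t,t)$ uses that $I$ is $fp_\infty$ \cite[Prop~10.1]{FLS}. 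Only after this identification does Theorem~\ref{thm-FFLSS} give the conclusion. This base-change step is the one nontrivial ingredient in the proof, and your proposal omits it.

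A minor point: your filtered-colimit argument for infinite-dimensional $M,N$ is both misstated ($\Ext^*(-,G)$ is contravariant and turns colimits into limits) and unnecessary. The paper simply notes that $t_M$ is a \emph{direct sum} of copies of $t$, that $\Ext$ sends direct sums in the first variable to products, and that $E^*_\infty$ is degreewise finite-dimensional so tensoring with it commutes with those products. Propositions~\ref{pr-enlarge} and~\ref{pr-transport} play no role in this lemma; they enter only in the second step (Lemma~\ref{lm-step-2}).
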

\begin{proof}
There is an equivalence of categories $\V_\kk\simeq \add(\Proj_\Fp,\kk)$ which sends a $\kk$-vector space $u$ to the additive functor $v\mapsto v\otimes_\Fp u$.
Thus the additive functors $\pi$ and $\rho$ are direct sums of copies of the additive functor $t:v\mapsto v\otimes_\Fp \kk$. Hence by additivity of $\Ext$, $\Phi$, $\pi^*_\infty$, and by degreewise finiteness of $E^*_\infty$, the proof reduces to the case $\pi=\rho=t$. In this case we have
\[\Ext^*_{\add(\A,\kk)}(t,t)=\Ext^*_{\kk}(\kk,\kk)=\Hom_\kk(\kk,\kk)\simeq \kk\;.\]
Thus the proof reduces to showing that $t^*_\infty:E^*_\infty\to \Ext^*_{\F(\Proj_\Fp,\kk)}(t,t)$ is an isomorphism. The latter is equivalent to showing that for all positive integers $r$, $t^*_r$ is an isomorphism in degree less than $2p^r$.

For this purpose, we consider the commutative square
\[\begin{tikzcd}
\Ext^i_{\PP_\kk}(I^{(r)},I^{(r)})\ar{rr}{t_r^*}&& \Ext^i_{\F(\Proj_\Fp,\kk)}(t,t)\\
\kk\otimes_\Fp\Ext^i_{\PP_\Fp}(I^{(r)},I^{(r)})\ar{u}{\simeq}\ar{rr}{\kk\otimes\alpha_r^\Fp}&& \kk\otimes_\Fp\Ext^i_{\F(\Proj_\Fp,\Fp)}(I,I)\ar{u}{\simeq} 
\end{tikzcd}\]
where the vertical isomorphism on the left is the base change isomorphism for strict polynomial functors \cite[2.7]{SFB} and the vertical map on the right is induced by tensorization with $\kk$, which is here an isomorphism thanks to \cite[Proposition~2.6 and Theorem~2.7]{Pira-Pan} (which reads again \cite[Proposition 10.1]{FLS}). As recalled in section \ref{subsec-Frobenius} $\alpha^\Fp_r$ is an isomorphism in degrees less than $2p^r$, hence $t_r^*$ is an isomorphism in degrees less than $2p^r$.
\end{proof}

\begin{lm}[Second step]\label{lm-step-2} Let $\A$ be an arbitrary essentially small, additive, $\Fp$-linear category. Then theorem \ref{thm-prelim} holds if $\pi=\kk\otimes_\Fp\A(a,-)$ for some $a\in \A$.
\end{lm}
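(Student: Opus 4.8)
The plan is to reduce the statement to the case $\A=\Proj_\Fp$, which is Lemma~\ref{lm-step-1}, by means of the $\aleph$-additive envelope together with Propositions~\ref{pr-enlarge} and~\ref{pr-transport}.

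First, since $\A$ is svelte, I fix a regular cardinal $\aleph$ such that $\A(a,b)$ has cardinality less than $\aleph$ for every object $b$ of $\A$; then Proposition~\ref{pr-transport} applies to $a$, and $\A^\aleph$ is again svelte by Proposition~\ref{pr-elt-prop}. Write $\iota\colon\A\hookrightarrow\A^\aleph$ and let $t\colon\Proj_\Fp\to\V_\kk$, $v\mapsto v\otimes_\Fp\kk$, be the functor of Lemma~\ref{lm-step-1}, with $\aleph$-additive extension $t^\aleph$. Because $\pi$ and $\rho$ are additive, $\Lan_\iota\pi$ and $\Lan_\iota\rho$ are their $\aleph$-additive extensions $\pi^\aleph$ and $\rho^\aleph$; a direct inspection of the morphism sets of $\A^\aleph$ shows that $\A^\aleph(a,-)$ preserves $\aleph$-direct sums, whence $\pi^\aleph\cong t^\aleph\circ\A^\aleph(a,-)$.

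Next come the two reductions. Proposition~\ref{pr-enlarge}, used in $\F$ and in $\add$, gives isomorphisms $\Ext^*_{\F(\A,\kk)}(\pi,\rho)\simeq\Ext^*_{\F(\A^\aleph,\kk)}(\pi^\aleph,\rho^\aleph)$, and likewise in the additive categories, compatibly with the comparison map $\Phi$. Then Proposition~\ref{pr-transport}, with $F=t$ and $G=\rho^\aleph$ (using $\pi^\aleph\cong t^\aleph\circ\A^\aleph(a,-)$), gives $\Ext^*_{\F(\A^\aleph,\kk)}(\pi^\aleph,\rho^\aleph)\simeq\Ext^*_{\F(\Proj_\Fp,\kk)}(t,\rho^\aleph\circ(a\otimes-))$, and since $\rho$ is additive and $a\otimes v\cong a^{\oplus\dim v}$ lies in $\A$ for finite dimensional $v$, one has $\rho^\aleph\circ(a\otimes-)\cong R$, where $R$ is a direct sum of copies of $t$ indexed by a $\kk$-basis of $\rho(a)$. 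The same statement holds in the additive categories, and there the equivalence $\add(\Proj_\Fp,\kk)\simeq\V_\kk$ of Lemma~\ref{lm-step-1} identifies $\Ext^*_{\add(\Proj_\Fp,\kk)}(t,R)$ with $\Hom_\kk(\kk,\rho(a))=\rho(a)$ placed in degree $0$. Composing, we obtain isomorphisms $\Ext^*_{\F(\A,\kk)}(\pi,\rho)\simeq\Ext^*_{\F(\Proj_\Fp,\kk)}(t,R)$ and $\Ext^*_{\add(\A,\kk)}(\pi,\rho)\simeq\Ext^*_{\add(\Proj_\Fp,\kk)}(t,R)$, still compatible with $\Phi$. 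By Lemma~\ref{lm-step-1} the map $\Psi_\infty$ attached to the triple $(\Proj_\Fp,t,R)$ is an isomorphism, so it suffices to know that the two reductions carry $\Psi_\infty$ for $(\A,\pi,\rho)$ to $\Psi_\infty$ for $(\Proj_\Fp,t,R)$.

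This last verification is where I expect the main obstacle to lie. The reductions are composites of restriction maps along the functors $\iota$ and $a\otimes-$ — which are transparently compatible with Yoneda products, with the inclusion $\add\hookrightarrow\F$, and with the operation $F\mapsto F^*(-)$ producing self-extensions of $F$ out of self-extensions of $I$ — together with a single restriction along the unit natural transformation $\eta\colon\id\Rightarrow\A^\aleph(a,a\otimes-)$ furnished by Proposition~\ref{pr-transport}. Compatibility of the reductions with $\Phi$ was recorded above, so the remaining task is to match the algebra map $\pi^*_\infty$ against $t^*_\infty$. Writing $u\colon t\to t^\aleph\circ\A^\aleph(a,a\otimes-)$ for the transformation $t^\aleph(\eta)$, one uses the naturality of the Yoneda action of $\Ext^*_{\F(\V_\kk,\kk)}(\id,\id)$ along natural transformations (the interchange law for horizontal composition) to see that $\pi^*_\infty(e')$ is carried by the reduction to the pushforward $u_*\bigl(t^*_\infty(e')\bigr)$; feeding this into $\Psi_\infty(e\otimes e')=\Phi(e)\circ\pi^*_\infty(e')$, and using the elementary identity $u^*(\beta\circ\gamma)=\beta\circ u^*(\gamma)$ for Yoneda composites together with the fact that the reduction of $\Ext^*_{\F(\A,\kk)}(\pi,\rho)$ itself factors through $u^*$, one obtains the required identification. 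This bookkeeping with the unit $\eta$ is the delicate point; everything else is formal.
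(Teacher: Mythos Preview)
Your proposal is correct and follows essentially the same route as the paper: pass to the $\aleph$-additive envelope via Proposition~\ref{pr-enlarge}, then use the adjunction of Proposition~\ref{pr-transport} to transport the question to $\Proj_\Fp$, and conclude with Lemma~\ref{lm-step-1}. The paper records the compatibility with $\Psi_\infty$ simply as a commutative square furnished by Proposition~\ref{pr-transport}, whereas you spell out the bookkeeping with the unit $\eta$ more explicitly; your identification of $\rho^\aleph\circ(a\otimes-)$ with a direct sum of copies of $t$ is correct but unnecessary, since Lemma~\ref{lm-step-1} already covers arbitrary additive $\rho$.
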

\begin{proof}
Let $\aleph$ be a cardinal larger than the cardinal of $\A(x,y)$ for all $x$ and $y$, and let $\A^\aleph$ be the $\aleph$-additive enveloppe of $\A$, as in definition \ref{def-kappa-completion}. Let $\pi'=k\otimes_{\mathbb{Z}}\A^\aleph(a,-)$ and let $\rho':\A^\aleph\to k\Md$ be an arbitrary additive extension of $\rho$. Then $\pi'$ is $\aleph$-additive, hence it is the left Kan extension of $\pi$ to $\A^\aleph$. Thus, by proposition \ref{pr-enlarge}, lemma \ref{lm-step-2} reduces to proving that
\[\Psi_\infty: \Ext^*_{\add(\A^\aleph,\kk)}(\pi',\rho')\otimes_\kk E^*_\infty \to \Ext^*_{\F(\A^\aleph,\kk)}(\pi',\rho')\]
is an isomorphism. 
Now let $t':\Proj_{\Fp}^\aleph\to \V_\kk$ be given by $t'(v)=\kk\otimes_\Fp v$. Then proposition \ref{pr-transport} provides a commutative square with vertical isomorphisms:
\begin{equation*}
\begin{tikzcd}
\Ext^*_{\add(\A^\aleph,\kk)}(\pi',\rho')\otimes_\kk E^*_\infty \ar{r}{\Psi_\infty}\ar{d}{\simeq} & \Ext^*_{\F(\A^\aleph,\kk)}(\pi',\rho')\ar{d}{\simeq}\\
\Ext^*_{\add(\Proj_{\Fp}^\aleph,\kk)}(t',\rho'\circ(a\otimes -))\otimes_\kk E^*_\infty \ar{r}{\Psi_\infty} & \Ext^*_{\F(\Proj_{\Fp}^\aleph,\kk)}(t',\rho'\circ(a\otimes -))
\end{tikzcd}\;.
\end{equation*}
Thus, in order to prove the result, it suffices to prove that the lower $\Psi_\infty$ in this square is an isomorphism. By proposition \ref{pr-enlarge} again, this is equivalent to showing that 
\[\Psi_\infty: \Ext^*_{\add(\Proj_\Fp,\kk)}(t,\rho_a)\otimes_\kk E^*_\infty \to \Ext^*_{\F(\Proj_\Fp,\kk)}(t,\rho_a)\]
is an isomorphism, where $t$ and $\rho_a$ denote the restrictions of $t'$ and $\rho'\circ (a\otimes-)$ to $\Proj_\Fp$. But this is an isomorphism by lemma \ref{lm-step-1}.
\end{proof}
We can now conclude the proof of theorem \ref{thm-prelim} by a $\delta$-functor argument.
\begin{proof}[Proof of theorem \ref{thm-prelim}]
By lemma \ref{lm-step-2}, $\Psi_\infty$ is an isomorphism for $\pi=\kk\otimes_{\mathbb{Z}}\A(a,-)$ for all objects $a$ of $\A$. Now every projective object of $\add(\A,\kk)$ is a direct summand of a direct sum of such functors. Noticing that $\Ext$ sends arbitrary direct sums at the source to products, that direct sums and products are here both exact, and using degreewise finiteness of $E^*_\infty$, we see that $\Psi_\infty$ is an isomorphism for all projectives $\pi$ of $\add(\A,\kk)$.

The source and the target of $\Psi_\infty$ are $\delta$-functors of the variable $\pi$. We claim that $\Psi_\infty$ is a morphism of $\delta$-functors. 
Indeed, since $\Phi$ is a morphism of $\delta$-functors and $E_\infty^*$ is concentrated in even degrees, in order to prove our claim it suffices to prove that for all $u\in \Ext_{\F(\Proj_\kk,\kk)}^{2k}(I,I)$ the map
\[
\begin{array}{cccc}
\chi_{u}\;:\; &\Ext^*_{\F(\A,\kk)}(\pi,\rho)& \to &\Ext^{*+2k}_{\F(\A,\kk)}(\pi,\rho)\\
& x &\mapsto & x\circ (\pi^*u)
\end{array}
\] 
is a morphism of $\delta$-functors. If $u$ is represented by an extension $0\to I\to U_0\to \cdots\to U_{2k}\to I\to 0$, and if $f:\pi\to \mu$ is a morphism, the following commutative diagram (in which all the vertical arrows are induced by $f$) 
\[
\begin{tikzcd}
0\ar{r} &\pi\ar{r}\ar{d} &\pi^*U_0\ar{r}\ar{d}&\cdots \ar{r}&\pi^*U_{2k}\ar{r}\ar{d}&\pi\ar{r}\ar{d}&0\\
0\ar{r} &\mu\ar{r} &\mu^*U_0\ar{r}&\cdots \ar{r}&\mu^*U_{2k}\ar{r}&\mu\ar{r}&0
\end{tikzcd}
\]
shows that $f\circ(\pi^*u)=(\mu^*u)\circ f$ \cite[III, Proposition 5.1]{MLHom}. Therefore for all $x$ one has $x\circ (\mu^*u)\circ f= x\circ f\circ (\pi^*u)$, which shows that $\chi_{u}$ is a natural transformation. Moreover, if $e=0\to \pi\to \nu\to \mu\to 0$ is a short exact sequence of additive functors, the associated connecting morphism $\Ext^i_{\F(\A,\kk)}(\pi,\rho)\to \Ext^{i+1}_{\F(\A,\kk)}(\mu,\rho)$ sends a class $z$ to the Yoneda composite $(-1)^iz\circ e$ \cite[III Theorem 9.1]{MLHom}. Now \cite[Proposition 1.5]{FLS} shows that $(\pi^*u)\circ e=e\circ (\mu^*u)$. Hence for all $x$ of degree $i$ we have $(-1)^i x\circ (\mu^*u)\circ e= (-1)^{i+2k}x\circ e\circ (\pi^*u)$, which shows that $\chi_u$ is a morphism of $\delta$-functors.

Now we consider the following assertions:
\begin{enumerate}
\item[$P(i)$] For all $\pi$ and all $\rho$, $\Psi_\infty$ is an isomorphism in degrees less or equal to $i$.
\item[$Q(i)$] For all $\pi$ and all $\rho$, $\Psi_\infty$ is an isomorphism in degrees less or equal to $i$, and a monomorphism in degree $i+1$.
\end{enumerate}
By using left exactness of $\Hom_{\add(\A,\kk)}(\pi,\rho)$ and of $\Hom_{\F(\A,\kk)}(\pi,\rho)$ considered as functors with respect to $\pi$, together with the fact that $\Psi_\infty$ is an isomorphism for all additive projective functors $\pi$, we obtain that $P(0)$ holds. Moreover, if $q:\mu\to \pi$ is a quotient of additive functors with $\mu$ projective, by diagram chasing in the ladder induced by $\Psi_\infty$ and by the long exact sequences in $\Ext$ associated to the short exact sequence $0\to \mathrm{Ker}\,q\to \mu\to \pi\to 0$, we prove that $P(i)\Rightarrow Q(i)\Rightarrow P(i+1)$ for all $i\ge 0$. Therefore $\Psi_\infty$ is an isomorphism in all degrees.   
\end{proof}

\section{Proof of theorem \ref{thm-intro-3}}\label{sec-proof}

The following lemma gives the first part of theorem \ref{thm-intro-3}, using the computation of $E_\infty^*$ recalled in section \ref{subsec-Frobenius}.
\begin{lm}\label{lm-iso}
The map $\alpha_\infty^\kk:E_\infty^*\to \Ext^*_{\F(\Proj_\kk,\kk)}(I,I)$ is an isomorphism for all perfect fields $\kk$ of positive characteristic.
\end{lm}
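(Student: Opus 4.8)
The plan is to obtain the lemma from theorem~\ref{thm-prelim} by specialising to $\A=\Proj_\kk$ and $\pi=\rho=I$. The category $\Proj_\kk$ is svelte, and it is $\Fp$-linear because $\kk$ has characteristic $p$; moreover the inclusion $I:\Proj_\kk\hookrightarrow\V_\kk$ is an additive functor. So theorem~\ref{thm-prelim} applies and yields an isomorphism of graded $\kk$-vector spaces
\[\Psi_\infty:\Ext^*_{\add(\Proj_\kk,\kk)}(I,I)\otimes_\kk E^*_\infty\;\xrightarrow{\,\simeq\,}\;\Ext^*_{\F(\Proj_\kk,\kk)}(I,I)=E^*\;,\qquad e\otimes e'\longmapsto \Phi(e)\circ I^*_\infty(e')\;.\]

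It then suffices to show that $\Ext^*_{\add(\Proj_\kk,\kk)}(I,I)$ is one-dimensional, concentrated in degree $0$, and spanned by $\id_I$. Granting this, $\Psi_\infty$ becomes, under the canonical identification $\kk\otimes_\kk E^*_\infty\cong E^*_\infty$, the map $e'\mapsto\Phi(\id_I)\circ I^*_\infty(e')=I^*_\infty(e')$; and $I^*_\infty=\alpha_\infty^\kk$, since by definition $I^*_\infty$ is $\alpha_\infty^\kk$ followed by the map $I^*$ induced by evaluation on $I$, while restriction along $I$ is isomorphic to the identity functor of $\F(\Proj_\kk,\kk)$ (the colimit defining $\overline{F}$ is taken over a poset with a terminal object), so $I^*$ is the identity of $E^*=\Ext^*_{\F(\Proj_\kk,\kk)}(I,I)$. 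Hence $\alpha_\infty^\kk$ is an isomorphism.

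To compute $\Ext^*_{\add(\Proj_\kk,\kk)}(I,I)$ I would invoke the identification recalled in the introduction: with $R=\kk$ and $M=N=\kk$, noting $I=t_\kk$, one gets
\[\Ext^*_{\add(\Proj_\kk,\kk)}(I,I)\simeq\Ext^*_{\kk\otimes_{\mathbb{Z}}\kk}(\kk,\kk)=\Ext^*_{\kk\otimes_{\Fp}\kk}(\kk,\kk)\;,\]
where $\kk$ is regarded as a module over $\kk\otimes_{\Fp}\kk$ through multiplication. In degree $0$ this is $\Hom_{\kk\otimes_{\Fp}\kk}(\kk,\kk)=\kk\cdot\id_I$, and in positive degrees it vanishes because $\kk$ is perfect: the relative Hochschild homology $\Tor_*^{\kk\otimes_{\Fp}\kk}(\kk,\kk)$ of a perfect field over $\Fp$ is $\kk$ concentrated in degree $0$. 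This last vanishing is the only genuinely new ingredient. It can be deduced from the acyclicity of the cotangent complex $L_{\kk/\Fp}$ of a perfect $\Fp$-algebra, or proved by hand by writing $\kk$ as a suitable filtered colimit (e.g.\ of the perfect closures, inside $\kk$, of its finitely generated subfields, with the purely transcendental part handled through the Frobenius) and noting that the absolute Frobenius of $\kk$, although an isomorphism, induces zero on positive-degree Hochschild classes (it kills $dx$, since $d(x^p)=0$), so that these classes disappear in the colimit.

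The main obstacle is precisely this positive-degree vanishing for the perfect field $\kk$ over $\Fp$: once it is in hand, the rest reduces to theorem~\ref{thm-prelim}, which is already established, plus the bookkeeping above.
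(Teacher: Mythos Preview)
Your proposal is correct and follows essentially the same route as the paper: specialise theorem~\ref{thm-prelim} to $\A=\Proj_\kk$ and $\pi=\rho=I$, then show that $\Ext^*_{\add(\Proj_\kk,\kk)}(I,I)\simeq\Ext^*_{\kk\otimes_{\Fp}\kk}(\kk,\kk)$ is concentrated in degree~$0$ using that $\kk$ is perfect. The paper justifies the positive-degree Hochschild vanishing via the Hochschild--Kostant--Rosenberg theorem together with $\Omega^1(\kk/\Fp)=0$, which is cleaner and more self-contained than the cotangent-complex or Frobenius-colimit sketches you offer; you might tighten that step accordingly.
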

\begin{proof} We are going to use a Hochschild homology computation to determine $\Ext^*_{\add(\Proj_\kk,\kk)}(I,I)$ and then apply theorem \ref{thm-prelim}.
 The enveloping ring $\kk\otimes_\mathbb{Z} \kk$ identifies as $\kk\otimes_\Fp \kk$, and every field extension of $\Fp$ is a filtered colimit of $\Fp$-subalgebras which are smooth and essentially of finite type. Thus $\mathrm{HH}_*(\kk)$ is an exterior algebra over the $\kk$-vector space of Kähler forms $\Omega^1(\kk/\Fp)$ by the Hochschild-Kostant-Rosenberg theorem \cite[Corollary 2.13]{Hubl}. Since $\kk$ is perfect, $\Omega^1(\kk/\Fp)$ is zero, so that $\mathrm{HH}_i(\kk)=\kk$ for $i=0$, and zero otherwise.
By duality, this implies that $\mathrm{HH}^i(\kk)=\kk$ for $i=0$, and zero otherwise.
Now the equivalence of categories
\[
\begin{array}{ccc}
(\kk\otimes_\Fp\kk)\Md & \simeq & \add(\Proj_\kk,\kk)\\
M & \mapsto & [v\mapsto v\otimes_\kk M]
\end{array}
\]
sends the $(\kk,\kk)$-bimodule $\kk$ to $I$, hence the graded vector space
\[\Ext^*_{\add(\Proj_\kk,\kk)}(I,I)\simeq \Ext^*_{\kk\otimes_\Fp\kk}(\kk,\kk)\simeq \mathrm{HH}^*(\kk)\]
equals zero in positive degree, and $\kk$ in degree zero. Thus, theorem \ref{thm-prelim} with $\A=\Proj_\kk$ and $\pi=\rho=I$ implies that $I^*=\alpha_\infty:E_\infty^*\to \Ext^*_{\F(\Proj_\kk,\kk)}(I,I)$ is an isomorphism. 
\end{proof}

\begin{proof}[Proof of theorem \ref{thm-intro-3}]
We observe that $\Psi_\infty = \Psi \circ (\id\otimes \alpha_\infty^\kk)$, where $\Psi_\infty$ is the isomorphism of theorem \ref{thm-prelim} and $\Psi$ is the map of theorem \ref{thm-intro-3}, which is defined in the introduction, page~\pageref{ppsi}. Since $\id\otimes \alpha_\infty^\kk$ is an isomorphism by lemma \ref{lm-iso}, we conclude that $\Psi$ is an isomorphism too.
\end{proof}

\begin{rk}\label{rk-perfect}
The proof of lemma \ref{lm-iso} shows that for $\A=\Proj_\kk$ and $\pi=\rho=I$ the source of $\Psi$ is equal to $\mathrm{HH}^*(\kk)\otimes_\kk \Ext^*_{\F(\Proj_\kk,\kk)}(I,I)$ while the target of $\Psi$ is equal to $\Ext^*_{\F(\Proj_\kk,\kk)}(I,I)$. If $\kk$ is not perfect, then $\mathrm{HH}_1(\kk)\simeq\Omega^1(\kk/\Fp)\ne 0$, so $\mathrm{HH}^*(\kk)$ is non-trivial, so that $\Psi$ fails to be an isomorphism. 
\end{rk}

\section{Dualization of theorem \ref{thm-intro-3}}
Given a vector space $V$ and a functor $F:\A^\op\to \V_\kk$, we let $D_VF:\A\to\V_\kk$ be the functor such that $D_VF(a)=\Hom_\kk(F(a),V)$. Then the tensor product over $\A$ is characterized by the isomorphism, natural with respect to $F$, $G$ and $V$:
\[\Hom_\kk(F\underset{\kk[\A]}{\otimes} G,V)\simeq \Hom_{\F(\A,\kk)}(G,D_V F)\;.\] 
By deriving this isomorphism, we obtain isomorphisms, natural with respect to $F$, $G$, $V$, in which $\Hom_\kk(-,V)$ is applied degreewise:
\[\Hom_\kk(\Tor^{\kk[\A]}_*(F,G),V)\simeq \Ext^*_{\F(\A,\kk)}(G,D_V F)\;.\]
If the functors $F$ and $G$ are additive, there is a similar isomorphism 
\[\Hom_\kk(\Tor^{\A\otimes_\mathbb{Z}\kk}_*(F,G),V)\simeq \Ext^*_{\add(\A,\kk)}(G,D_V F)\;.\]
These isomorphisms allow us to prove corollary \ref{cor-intro}.

\begin{proof}[Proof of corollary \ref{cor-intro}]
We have a chain of graded isomorphisms, natural with respect to $\pi$, $\rho$ and $V$, in which the third isomorphism is provided by theorem \ref{thm-intro-3} and the fact that $T_*$ is finite-dimensional in each degree:
\begin{align*}
\Hom_\kk(\Tor_*^{\kk\otimes_{\mathbb{Z}}\A}(\pi,\rho)\otimes_\kk T_*,V)
&\simeq \Hom_\kk(\Tor_*^{\kk\otimes_{\mathbb{Z}}\A}(\pi,\rho),V)\otimes_\kk \Ext^*_{\F(\Proj_\kk,\kk)}(I,I)\\
&\simeq \Ext^*_{\add(\A,\kk)}(\pi,D_V\rho)\otimes_\kk \Ext^*_{\F(\Proj_\kk,\kk)}(I,I)\\
&\simeq \Ext^*_{\F(\A,\kk)}(\pi,D_V\rho)\\
&\simeq \Hom_\kk(\Tor_*^{\kk[\A]}(\pi,\rho),V)\;.
\end{align*}
Hence the result follows from the Yoneda lemma.
\end{proof}

\section{Some $\Tor$-computations between non-additive functors}\label{sec-comput-tor}

Additive functors are building blocks for interesting non-additive functors, in particular polynomial functors. The simplest way to produce polynomial functors of arbitrary degree from additive functors is to consider tensor products of such functors. In this section, we explain how to compute $\Tor$ between tensor products of additive functors from the $\Tor$ between their additive building blocks. The results of the section should not be surprising for experts, though they cannot be found in the literature.
 Combined with our theorem \ref{thm-intro-3}, these results yield example \ref{ex-intro} of the introduction, and many other computations.

In this section $\A$ and $\B$ are arbitrary essentially small additive categories, and $\kk$ is an arbitrary field (of characteristic zero, or of positive characteristic). Unadorned tensor products are taken over $\kk$. We first give $\Tor$-versions of some well-known properties of $\Ext$, see e.g.  \cite[proof of Theorem 1.7]{FFSS} or \cite{Pira-Pan}. Given a functor $\phi:\A\to \B$, we have a canonical restriction map 
\[\res^\phi:\Tor_*^{\kk[\A]}(F\circ\phi, G\circ\phi)\to \Tor_*^{\kk[\B]}(F,G)\;.\]
Noting that precompositions (as $-\circ\phi$) are exact, the following lemma is proved by a straightforward verification. 
\begin{lm}\label{lm-tor-adj}
Let $\phi:\A\leftrightarrows \B:\psi$ be an adjoint pair, and let $u:\id\to \psi\circ\phi$ and $e:\phi\circ\psi\to\id$ be the unit and the counit of the adjunction. Then the following composition is an isomorphism, whose inverse is induced by $F(e)$ and $\res^\psi$:
\[\Tor_*^{\kk[\A]}(F\circ\phi,G)\xrightarrow[]{G(u)_*}\Tor_*^{\kk[\A]}(F\circ \phi,G\circ \psi\circ\phi) \xrightarrow[]{\res^\phi}\Tor_*^{\kk[\B]}(F,G\circ \psi)\;.\]
\end{lm}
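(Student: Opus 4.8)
The plan is to verify Lemma~\ref{lm-tor-adj} by the same elementary argument that establishes its $\Ext$-analogue, transposed through the defining adjunction of $\otimes_\A$. First I would observe that by naturality of $\res^\phi$ and $\res^\psi$ in both variables, and since $\res^\psi\circ\res^\phi = \res^{\psi\circ\phi}$ at the level of the restriction maps on $\Tor$, the composite in the other direction is controlled by the triangle identities. Concretely, the composite
\[
\Tor_*^{\kk[\B]}(F,G\circ\psi)\xrightarrow{\res^\psi}\Tor_*^{\kk[\A]}(F\circ\psi,G\circ\psi\circ\phi)\xrightarrow{(G(e)\circ -)_*\ \text{or}\ F(\text{?})}\cdots
\]
has to be set up carefully; the cleanest route is to dualize. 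Since $\Tor$ between functors is characterized by the adjunction $\Hom_\kk(F\otimes_\A G,V)\simeq\Hom_{\F(\A,\kk)}(G,D_VF)$ recalled just before the proof of Corollary~\ref{cor-intro}, and this passes to derived functors, a map between $\Tor$-groups that is natural in the relevant variable is an isomorphism iff its $\kk$-linear dual is, and the dual statement is an assertion about $\Ext$ in the functor category $\F(\A,\kk)$.

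So the main step is: reduce to the dual $\Ext$-statement, namely that for an adjoint pair $\phi\dashv\psi$ the composite
\[
\Ext^*_{\F(\A,\kk)}(G,F\circ\phi)\ \text{-type map}\ \longleftrightarrow\ \Ext^*_{\F(\B,\kk)}(G\circ\psi,F)
\]
obtained from restriction along $\psi$ and precomposition with the unit/counit is an isomorphism — this is precisely the classical fact cited in the excerpt (e.g.\ \cite[Lm 1.5]{Pira-Pan}, or the proof of \cite[Thm 1.7]{FFSS}), applied to the contravariant-variable functors. After dualizing via $D_V$, the functor $F\circ\phi$ on $\A^\op$ dualizes to $D_V(F)\circ\phi$ etc., and the adjunction $\phi\dashv\psi$ on $\A,\B$ gives an adjunction on the opposite categories with the roles of unit and counit swapped, which is exactly what matches the maps $G(u)_*$ and $\res^\phi$ in the statement. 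I would then identify the dual of $G(u)_*\circ\res^\phi$ with the standard $\Ext$-isomorphism and the dual of the reverse map (built from $F(e)$ and $\res^\psi$) with its standard inverse, invoking the triangle identities $\psi(e)\circ u_\psi=\id_\psi$ and $e_\phi\circ\phi(u)=\id_\phi$ to see that the two composites dualize to mutually inverse maps.

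The genuinely routine part — which I would state rather than grind through — is checking that the three named maps ($G(u)_*$, $\res^\phi$, $\res^\psi$, and $F(e)$) are compatible with the $\otimes_\A$/$D_V$ duality, i.e.\ that each dualizes to the correspondingly named map on $\Ext$; this is a diagram chase using naturality of $\res$ and of the evaluation pairing, with no surprises. The only point that deserves a word of care is that the duality $\Hom_\kk(-,V)$ must be taken as a \emph{restricted} dual applied degreewise, exactly as in the proof of Corollary~\ref{cor-intro}, so that the Yoneda-type argument recovering the original $\Tor$-statement from its $V$-natural dual is valid; this forces no finiteness hypothesis since one lets $V$ range over all vector spaces. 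The main (mild) obstacle is purely bookkeeping: making sure that the unit $u$ on $\A$ and counit $e$ on $\B$ correspond, under $(-)^\op$, to the counit and unit of the induced adjunction $\psi^\op\dashv\phi^\op$, so that the $\Ext$-isomorphism one quotes is literally the one obtained from $u$ and $e$ and not from some other pair; once this identification is pinned down, the lemma follows formally. Alternatively, and perhaps more transparently for the reader, one can avoid dualization entirely and give the "straightforward check" directly: the composite $\res^\phi\circ G(u)_*$ is split by the map induced by $F(e)$ and $\res^\psi$ precisely because of the triangle identities, and both composites are seen to be the identity by evaluating on representing complexes built from the standard flat resolutions by representable functors $\kk[\A](-,a)$, using that $\kk[\B](-,\phi a)\circ\psi\cong\kk[\B](\psi-,\phi a)\cong$ a summand-free translate via adjunction — I would present whichever of the two proofs is shorter, flagging that they are equivalent.
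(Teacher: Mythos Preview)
The paper gives no proof at all: it simply records the lemma as ``a straightforward check.'' Your proposal is correct in substance but vastly over-engineered for what is intended. The dualization route via $D_V$ and the $\Ext$-adjunction from \cite[Lm~1.5]{Pira-Pan} does work, but it introduces an extra layer (tracking how $u$ and $e$ swap under passing to opposite categories, checking compatibility of $\res$ with the $\otimes_\A/D_V$ duality, invoking Yoneda over all $V$) that is entirely unnecessary here. The direct verification you sketch at the very end---compose the two maps in both orders, use naturality of $\res$ together with the triangle identities $e_\phi\circ\phi(u)=\id_\phi$ and $\psi(e)\circ u_\psi=\id_\psi$, and observe that each composite is the identity already at the level of the tensor product (hence on $\Tor$ after passing to resolutions)---is exactly the ``straightforward check'' the paper has in mind, and it is a few lines. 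If you were to write anything, write that; the dualization argument buys you nothing except extra bookkeeping.
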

In the sequel, lemma \ref{lm-tor-adj} will be applied to the diagonal functor $D:\A\to \A^d$, $D(a)=(a,\dots,a)$, and the direct sum functor $\Pi:\A^d\to \A$, $\Pi(a_1,\dots,a_d)=a_1\oplus\cdots\oplus a_d$, which are adjoint to each other on both sides, giving isomorphisms:
 \begin{align*} 
&\Tor^{\kk[\A^{d}]}_*(F\circ \Pi,G)\simeq \Tor^{\kk[\A]}_*(F,G\circ D)\;,\\
&\Tor^{\kk[\A]}_*(F'\circ D,G')\simeq \Tor^{\kk[\A^{d}]}_*(F',G'\circ \Pi)\;.
\end{align*}
The second property is a K\"unneth isomorphism. 
We let $F_1\boxtimes \cdots\boxtimes F_d:\A^d\to \V_\kk$ denote the external tensor product of functors $F_i:\A\to \V_\kk$, defined by:
\[(F_1\boxtimes \cdots \boxtimes F_d)(a_1,\dots,a_d)=F_1(a_1)\otimes\cdots\otimes F_d(a_d)\;.\]
\begin{lm}\label{lm-tor-kun}
For all functors $F_i:\A^\op\to \V_\kk$ and $G_i:\A\to \V_\kk$, with $1\le i\le d$, there is a K\"unneth isomorphism:
\[\kappa:\Tor_*^{\kk[\A^d]}(F_1\boxtimes\cdots\boxtimes F_d,G_1\boxtimes\cdots\boxtimes G_d)\simeq \bigotimes_{1\le i\le d}\Tor_*^{\kk[\A]}(F_i,G_i)\;.\]
\end{lm}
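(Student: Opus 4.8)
The plan is to mimic the classical proof of the Künneth formula, treating a general $d$ directly (an induction on $d$ would work equally well). For each $i$, fix a projective resolution $P^{(i)}_\bullet\to F_i$ in $\F(\A^\op,\kk)$ by direct sums of representable functors $\kk[\A(-,a)]$ (the functor $x\mapsto\kk[\A(x,a)]$, which corepresents evaluation at $a$ and is therefore projective). Form the $d$-fold complex $P^{(1)}_\bullet\boxtimes\cdots\boxtimes P^{(d)}_\bullet$ in $\F((\A^{d})^\op,\kk)=\F(\A^\op\times\cdots\times\A^\op,\kk)$ and let $P_\bullet$ be its totalization. Then $P_\bullet\to F_1\boxtimes\cdots\boxtimes F_d$ is a projective resolution: it is a resolution because evaluation at an object $(a_1,\dots,a_d)$ turns it into the total complex of $P^{(1)}_\bullet(a_1)\otimes\cdots\otimes P^{(d)}_\bullet(a_d)$, whose homology is $F_1(a_1)\otimes\cdots\otimes F_d(a_d)$ concentrated in degree $0$ by the Künneth theorem over the field $\kk$; and each term of $P_\bullet$ is a finite direct sum of functors $P^{(1)}_{p_1}\boxtimes\cdots\boxtimes P^{(d)}_{p_d}$, which are projective because $\kk[\A(-,a_1)]\boxtimes\cdots\boxtimes\kk[\A(-,a_d)]\cong\kk[(\A^{d})(-,(a_1,\dots,a_d))]$ is again representable and $\boxtimes$ commutes with direct sums.

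The second ingredient is a Fubini identity for tensor products over a product category: for $H\in\F((\A\times\B)^\op,\kk)$ and $K\in\F(\A\times\B,\kk)$ one has $H\otimes_{\A\times\B}K\cong\int^{a}\bigl(\int^{b}H(a,b)\otimes K(a,b)\bigr)$, and when $H=F'\boxtimes F''$, $K=G'\boxtimes G''$ this factors as $(F'\otimes_\A G')\otimes(F''\otimes_\B G'')$. Iterating, for all functors (in particular the representables above),
\[(E_1\boxtimes\cdots\boxtimes E_d)\otimes_{\A^{d}}(G_1\boxtimes\cdots\boxtimes G_d)\;\cong\;(E_1\otimes_\A G_1)\otimes\cdots\otimes(E_d\otimes_\A G_d)\;,\]
naturally in all arguments. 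Applying $-\otimes_{\A^{d}}(G_1\boxtimes\cdots\boxtimes G_d)$ to the resolution $P_\bullet$ degreewise, and combining this identity with the co-Yoneda isomorphism $\kk[\A(-,a)]\otimes_\A G\cong G(a)$, identifies the resulting complex with the totalization of the $d$-fold complex obtained as the external tensor product over $\kk$ of the complexes $C^{(i)}_\bullet:=P^{(i)}_\bullet\otimes_\A G_i$, i.e. with the tensor product of complexes $C^{(1)}_\bullet\otimes_\kk\cdots\otimes_\kk C^{(d)}_\bullet$.

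Since $\Tor_*^{\kk[\A^{d}]}(F_1\boxtimes\cdots\boxtimes F_d,G_1\boxtimes\cdots\boxtimes G_d)$ is the homology of $P_\bullet\otimes_{\A^{d}}(G_1\boxtimes\cdots\boxtimes G_d)$ and $H_*(C^{(i)}_\bullet)=\Tor_*^{\kk[\A]}(F_i,G_i)$, a final application of the Künneth theorem over $\kk$ produces the asserted isomorphism; taking $\kappa$ to be the evident external-product map on $\Tor$ and checking naturality amounts to comparing with the standard Künneth comparison maps at each of the steps above. The points that require genuine care, rather than routine bookkeeping with (multi)complexes and Koszul signs, are the observation that external tensor products of representables are again representable — this is where the specific structure of the functor category $\F(\A^{d},\kk)$ enters, via $\kk[S\times S']\cong\kk[S]\otimes\kk[S']$ for sets $S,S'$ — and the Fubini identity for $\otimes_{\A\times\B}$; neither is difficult, so, much as for Lemma~\ref{lm-tor-adj}, the statement is in the end a formal consequence of standard homological algebra in functor categories.
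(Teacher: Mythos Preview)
Your argument is correct and follows essentially the same route as the paper: both hinge on the fact that $\kk[\A(-,a_1)]\boxtimes\cdots\boxtimes\kk[\A(-,a_d)]\cong\kk[\A^d(-,(a_1,\dots,a_d))]$, from which the degree-zero identity $(F_1\boxtimes\cdots\boxtimes F_d)\otimes_{\A^d}(G_1\boxtimes\cdots\boxtimes G_d)\cong\bigotimes_i F_i\otimes_\A G_i$ follows, and then pass to projective resolutions and apply K\"unneth over the field $\kk$. The paper states this tersely (checking the isomorphism on representables and invoking cocontinuity), whereas you spell out the Fubini step and the explicit total complex, but the substance is the same.
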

\begin{proof} For all objects $a_1,\dots,a_d$ of $\A$, the external tensor product $P_{a_1}\boxtimes \cdots \boxtimes P_{a_d}$ is isomorphic to $P_{(a_1,\dots, a_d)}$, hence the Yoneda isomorphisms yield an isomorphism, natural with respect to the functors $G_i$ and the standard projectives $P_{a_i}$:
\[(P_{a_1}\boxtimes\cdots \boxtimes P_{a_d})\underset{\kk[\A^d]}{\otimes} (G_1\boxtimes\cdots \boxtimes G_d)\simeq \bigotimes_{1\le i\le d} G_i(a_i)\simeq\bigotimes_{1\le i\le d}(P_{a_i}\underset{\kk[\A]}{\otimes} G_i)\;.\]
The result follows from this isomorphism by taking projective resolutions of the $F_i$'s by direct sums of functors of the shape $P_a$.
\end{proof}

Now we define some non-additive functors of interest.
We fix additive functors $\pi_1, \dots, \pi_r$ from $\A^\op$ to $\V_\kk$ and we consider the (non-additive if $r>1$) functor: 
\[
\begin{array}{cccc}
\pi^{\otimes}:=\pi_1\otimes\cdots \otimes\pi_r:&\A^\op&\to & \V_\kk\\
&a & \mapsto & \pi_1(a)\otimes \cdots \otimes\pi_r(a)
\end{array}\;.
\]
We also fix covariant additive functors $\rho_1,\dots,\rho_s$, we set $\rho^\otimes:=\rho_1\otimes\cdots\otimes \rho_s$, and we consider the graded vector space
\[\mathbb{T}_*=\Tor_*^{\kk[\A]}(\pi^{\otimes},\rho^{\otimes})\;.\]
The next two propositions compute this graded vector space. 
The first one is a variant of Pirashvili's vanishing lemma \cite[Corollary 2.13]{Pira-Pan}, \cite{Pira-HA}.
\begin{pr}\label{pr-vanish}
If $r\ne s$, then $\mathbb{T}_*=0$. 
\end{pr}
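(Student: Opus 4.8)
The plan is to run the classical strategy behind Pirashvili's vanishing lemma: expand one of the two tensor-product functors using the additivity of its factors, transport the resulting direct-sum decomposition through the adjunction and K\"unneth isomorphisms of lemmas \ref{lm-tor-adj} and \ref{lm-tor-kun}, and then observe that when $d\neq e$ each summand that appears carries a tensor factor of the form $\Tor_*^{\kk[\A]}(\kk[\A(-,0)],\rho_k)$ or $\Tor_*^{\kk[\A]}(\pi_i,\kk[\A(0,-)])$. Such a factor vanishes: $\kk[\A(-,0)]$ and $\kk[\A(0,-)]$ are standard projectives (the representable sets $\A(-,0)$, $\A(0,-)$ being singletons, $0$ being a zero object), so by the Yoneda isomorphism of lemma \ref{lm-tor-kun} these $\Tor$-groups are concentrated in degree $0$ where they equal $\rho_k(0)$, resp.\ $\pi_i(0)$, which is $0$ because additive functors are reduced.

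Assume first that $d<e$. Writing $\rho^\otimes=(\rho_1\boxtimes\cdots\boxtimes\rho_e)\circ D$ with $D\colon\A\to\A^e$ the iterated diagonal, the first displayed consequence of lemma \ref{lm-tor-adj} (with $\A^e$ in place of $\A^d$) gives $\mathbb{T}_*\simeq\Tor_*^{\kk[\A^e]}(\pi^\otimes\circ\Pi,\rho_1\boxtimes\cdots\boxtimes\rho_e)$, where $\Pi\colon\A^e\to\A$ is the iterated direct sum. Since each $\pi_i$ is additive, there are natural isomorphisms $\pi_i(a_1\oplus\cdots\oplus a_e)\simeq\bigoplus_{k=1}^{e}\pi_i(a_k)$, and distributing the tensor product over them yields a natural direct-sum decomposition of $\pi^\otimes\circ\Pi$ indexed by the maps $g\colon[d]\to[e]$, the $g$-th summand being the external tensor product over $k\in[e]$ of the functors $a\mapsto\bigotimes_{i\in g^{-1}(k)}\pi_i(a)$. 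Substituting this, using that $\Tor$ sends direct sums in the first variable to direct sums, and applying lemma \ref{lm-tor-kun} to each summand, I obtain
\[\mathbb{T}_*\;\simeq\;\bigoplus_{g\colon[d]\to[e]}\ \bigotimes_{k=1}^{e}\Tor_*^{\kk[\A]}\left({\textstyle\bigotimes_{i\in g^{-1}(k)}\pi_i},\ \rho_k\right)\;.\]
As $d<e$, every $g$ has an empty fibre $g^{-1}(k_0)$, so the corresponding empty tensor product is the constant functor $\kk=\kk[\A(-,0)]$ and the $k_0$-th factor vanishes as explained above; hence $\mathbb{T}_*=0$.

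The case $d>e$ is the mirror image: writing $\pi^\otimes=(\pi_1\boxtimes\cdots\boxtimes\pi_d)\circ D$ with $D\colon\A\to\A^d$ and applying the second displayed consequence of lemma \ref{lm-tor-adj} gives $\mathbb{T}_*\simeq\Tor_*^{\kk[\A^d]}(\pi_1\boxtimes\cdots\boxtimes\pi_d,\rho^\otimes\circ\Pi)$; expanding $\rho^\otimes\circ\Pi$ through the additivity of the $\rho_j$ and applying lemma \ref{lm-tor-kun} again yields
\[\mathbb{T}_*\;\simeq\;\bigoplus_{h\colon[e]\to[d]}\ \bigotimes_{i=1}^{d}\Tor_*^{\kk[\A]}\left(\pi_i,\ {\textstyle\bigotimes_{j\in h^{-1}(i)}\rho_j}\right)\;,\]
and since $d>e$ every $h$ omits some $i_0\in[d]$, so the $i_0$-th factor is $\Tor_*^{\kk[\A]}(\pi_{i_0},\kk[\A(0,-)])=\pi_{i_0}(0)=0$; hence $\mathbb{T}_*=0$ here too.

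I do not expect any genuine obstacle: the statement falls out of the machinery already assembled. The only points needing care are bookkeeping ones — applying lemma \ref{lm-tor-adj} in the right direction once variances are tracked, verifying that the additivity decompositions of $\pi^\otimes\circ\Pi$ and $\rho^\otimes\circ\Pi$ are natural in all $d$ (resp.\ $e$) variables so that lemma \ref{lm-tor-kun} applies termwise, and recognising the empty tensor product as the standard projective $\kk[\A(-,0)]$ (resp.\ $\kk[\A(0,-)]$), whose $\Tor$ against a reduced functor is concentrated in degree $0$ and there equals that functor's value at the zero object.
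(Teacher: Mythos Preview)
Your proof is correct and follows essentially the same strategy as the paper's: use the $(D,\Pi)$-adjunction of lemma \ref{lm-tor-adj} to pass to $\A^d$ (or $\A^e$), expand the composite with $\Pi$ via additivity into a direct sum of external tensor products, and apply the K\"unneth formula of lemma \ref{lm-tor-kun} so that each summand carries a vanishing factor. The only minor difference is in that vanishing step: the paper isolates a general cancellation property $(*)$ between reduced and constant functors (via reduced projective resolutions), whereas you observe more directly that the constant functor $\kk$ is the standard projective $P_0=\kk[\A(-,0)]$, so its $\Tor$ against an additive functor is concentrated in degree zero and equals the value at $0$, which is zero.
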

\begin{proof}
Every (covariant or contravariant) functor $F$ from $\A$ to $\V_\kk$ splits as a direct sum of a constant functor with value $F(0)$ and a "reduced" functor $\mathrm{red}\,F$, that is, a functor such that $(\mathrm{red}\,F)(0)=0$. Reduced functors have projective resolutions by direct sums of reduced standard projectives, and the tensor product over $\A$ of a reduced functor with a constant functor is zero, hence we obtain:

\begin{enumerate}
\item[$(*)$] Assume either that $F$ is reduced and $G$ is constant, or that $G$ is reduced and $F$ is constant. Then $\Tor_*^{\kk[\A]}(F,G)=0$.
\end{enumerate}

Now assume for example that $r> s$.
Let $\pi^{\boxtimes}:=\pi_1\boxtimes\cdots\boxtimes \pi_r$. Then $\pi^{\otimes}=\pi^{\boxtimes}\circ D$, hence lemma \ref{lm-tor-adj} yields an isomorphism $\mathbb{T}_*\simeq\Tor_*^{\kk[\A^r]}(\pi^{\boxtimes},\rho^{\otimes}\circ \Pi)$. By additivity of the $\rho_i$, the functor $\rho^{\otimes}\circ \Pi$ is isomorphic to a direct sum of external tensor products $F_1\boxtimes\cdots\boxtimes F_{r}$ and since $r> s$ at least one $F_i$ is constant. Thus, the result follows from the K\"unneth isomorphism of lemma \ref{lm-tor-kun} together with the cancellation property $(*)$.
\end{proof}

Henceforth we suppose that $r=s$. For every permutation $\sigma\in\Si_r$, we set $\pi^\otimes\sigma:=\pi_{\sigma(1)}\otimes \cdots \otimes \pi_{\sigma(r)}$ and $\pi^{\boxtimes}\sigma:=\pi_{\sigma(1)}\boxtimes\cdots\boxtimes \pi_{\sigma(r)}$. We still denote by $\sigma$ the morphism $\pi^\otimes\to\pi^\otimes\sigma$ that $\sigma$ induces:
\[
\begin{array}{cccc}
\sigma:& \pi_1(a)\otimes\cdots\otimes \pi_r(a)
&\to & \pi_{\sigma(1)}(a)\otimes\cdots\otimes \pi_{\sigma(r)}(a)\\
& x_1\otimes \cdots\otimes x_r &\mapsto & x_{\sigma(1)}\otimes \cdots\otimes x_{\sigma(r)}
\end{array}\;.
\]
We set
\[\mathbf{T}^\sigma_*:= \bigotimes_{1\le i\le r}\Tor_*^{\kk[\A]}(\pi_{\sigma(i)},\rho_i)\;,\]
and we denote by $\Upsilon$ the graded $\kk$-linear map defined as the following composition (where the first map is induced by the maps $\sigma:\pi^{\otimes}\to \pi^{\otimes}\sigma$, and the last isomorphism is the K\"unneth isomorphism of lemma \ref{lm-tor-kun}): 
\begin{align*}
\Upsilon: 
\mathbb{T}_*\xrightarrow[]{\prod \sigma_*}  
\bigoplus_{\sigma\in\Si_{r}} \Tor_*^{\kk[\A]}(\pi^{\otimes}\sigma,\rho^{\otimes})
\xrightarrow[]{\bigoplus \res^D}&
\bigoplus_{\sigma\in\Si_{r}} \Tor_*^{\kk[\A^{r}]}(\pi^{\boxtimes}\sigma,\rho^{\boxtimes})\simeq \bigoplus_{\sigma\in\Si_r}\mathbf{T}_*^\sigma\;.
\end{align*}
The next proposition is our computation of $\mathbb{T}_*$ when $r=s$.
\begin{pr}
The map $\Upsilon$ is an isomorphism.
\end{pr}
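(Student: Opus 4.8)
The plan is to prove that $\Upsilon$ is an isomorphism by reducing, as in the proof of Proposition \ref{pr-vanish}, to a Künneth-type computation after passing through the diagonal/direct-sum adjunction. First I would apply Lemma \ref{lm-tor-adj} to the pair $(D,\Pi)$: since $\pi^{\otimes}=\pi^{\boxtimes}\circ D$, we get a natural isomorphism $\mathbb{T}_*\simeq \Tor_*^{\kk[\A^d]}(\pi^{\boxtimes},\rho^{\otimes}\circ\Pi)$, and the restriction map $\res^D$ appearing in the definition of $\Upsilon$ is precisely the one entering this isomorphism (up to the unit $\rho^{\otimes}(u)$). The key structural input is then a decomposition of $\rho^{\otimes}\circ\Pi:\A^d\to\V_\kk$: because each $\rho_i$ is additive, $\rho_i\circ\Pi$ is naturally isomorphic to $\rho_i(a_1)\oplus\cdots\oplus\rho_i(a_d)$, so $\rho^{\otimes}\circ\Pi=\bigotimes_{i=1}^d(\rho_i\circ\Pi)$ distributes into a direct sum, indexed by functions $\phi:\{1,\dots,d\}\to\{1,\dots,d\}$, of external tensor products $\bigboxtimes_{j=1}^d\big(\bigotimes_{i:\phi(i)=j}\rho_i\big)$.

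Next I would feed this decomposition into the Künneth isomorphism of Lemma \ref{lm-tor-kun}. By the cancellation property $(*)$ from the proof of Proposition \ref{pr-vanish} (a reduced functor tensored with a constant functor has vanishing $\Tor$), every summand indexed by a non-bijective $\phi$ contributes zero: such a $\phi$ misses some $j_0$, so the $j_0$-th slot of the external tensor product is the empty tensor product, i.e. the constant functor $\kk$, and since each $\pi_i$ is additive hence reduced, the corresponding factor $\Tor_*^{\kk[\A]}(\pi_{?},\kk)$ vanishes. Thus only the summands indexed by permutations $\sigma\in\Si_d$ survive, and for such a $\sigma$ the Künneth isomorphism identifies the surviving summand with $\bigotimes_{i}\Tor_*^{\kk[\A]}(\pi_i,\rho_{\sigma^{-1}(i)})=\mathbb{T}_*^{\sigma^{-1}}$ — matching $\bigoplus_\sigma\mathbb{T}_*^\sigma$ after reindexing. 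Chasing the composite, what one has shown is that the single map $\res^D:\mathbb{T}_*\to\bigoplus_\sigma\Tor_*^{\kk[\A^d]}(\pi^{\boxtimes},\rho^{\boxtimes}\sigma')$ followed by Künneth is an isomorphism; one then needs to reconcile this with the stated $\Upsilon$, where instead the permutations act on the source $\pi^{\otimes}$ (via the maps $\sigma:\pi^{\otimes}\to\pi^{\otimes}\sigma$) and $\res^D$ is applied diagonally.

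That reconciliation is the step I expect to be the main obstacle, since it is entirely a matter of bookkeeping with how $\Si_d$ acts and how the adjunction isomorphism interacts with permuting tensor factors. The clean way to handle it: note that the isomorphism $\rho^{\otimes}\circ\Pi\simeq\bigoplus_\phi(\cdots)$ is $\Si_d$-equivariant in an appropriate sense, so that the map $\sigma_*:\mathbb{T}_*\to\Tor_*^{\kk[\A]}(\pi^{\otimes}\sigma,\rho^{\otimes})$, after applying Lemma \ref{lm-tor-adj}, lands (via the projection onto the summand indexed by $\sigma$) exactly in the factor $\Tor_*^{\kk[\A^d]}(\pi^{\boxtimes}\sigma,\rho^{\boxtimes})$ on which Künneth gives $\mathbb{T}_*^\sigma$, and lands in zero on all other permutation-indexed summands by the same $(*)$-vanishing applied to a mismatched slot. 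Concretely: $\res^D$ of the image of $\sigma_*$ has $\tau$-component, for $\tau\neq\sigma$, a $\Tor$ with at least one slot of the form $\Tor_*^{\kk[\A]}(\pi_{\sigma(i)},\kk)=0$; and its $\sigma$-component is the Künneth factor $\bigotimes_i\Tor_*^{\kk[\A]}(\pi_{\sigma(i)},\rho_i)=\mathbb{T}_*^\sigma$. Hence $\Upsilon$ is block-diagonal with respect to the two $\Si_d$-indexed direct sum decompositions, and each block is the Künneth isomorphism, so $\Upsilon$ is an isomorphism. A small amount of care is needed to check that the single map $\mathbb{T}_*\xrightarrow{\Upsilon}\bigoplus_\sigma\mathbb{T}_*^\sigma$ is surjective as well — this follows because the composite $\res^D$ into the full direct sum $\bigoplus_\phi$ is already an isomorphism, and we have merely discarded the zero summands, so no information is lost in passing to the permutation-indexed sub-sum.
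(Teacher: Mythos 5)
Your overall strategy is the right one and runs parallel to the paper's: pass through the $(D,\Pi)$ adjunction, decompose the composite with $\Pi$ using additivity of the factors, kill the non-bijective summands with the vanishing property $(*)$, and identify the surviving summands via the K\"unneth isomorphism. The paper does exactly this, except that it decomposes $\pi^{\otimes}\circ\Pi$ (the contravariant side) rather than $\rho^{\otimes}\circ\Pi$. That choice is not cosmetic, and it is at the "reconciliation" step --- which you correctly flag as the main obstacle --- that your argument has a genuine gap.

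Your first part produces an abstract isomorphism $\mathbb{T}_*\simeq\bigoplus_{\sigma}\mathbb{T}_*^{\sigma}$, by discarding the summands of $\rho^{\otimes}\circ\Pi$ indexed by non-bijective $\phi$. To conclude, you must show that $\Upsilon$, whose $\sigma$-component is $\kappa\circ\res^{D}\circ\sigma_*$, is block-diagonal with invertible diagonal blocks relative to the induced decomposition $\mathbb{T}_*=\bigoplus_{\tau}A_\tau$ (where $A_\tau$ denotes the summand attached to a bijection $\tau$). The off-diagonal blocks are maps $A_\tau\to\mathbb{T}_*^{\sigma}$ between two groups that are both nonzero in general, so their vanishing cannot be deduced from the vanishing of some $\Tor_*^{\kk[\A]}(\pi_{\sigma(i)},\kk)$: there is no vanishing $\Tor$ group through which these maps visibly factor. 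What is actually needed is an explicit computation of how the swap maps $\sigma:\pi^{\otimes}\to\pi^{\otimes}\sigma$ interact with the adjunction unit and with the decomposition; note that these swaps are natural transformations over $\A$, not over $\A^{d}$ (permuting tensor factors also permutes the arguments, so $\sigma$ does not come from a map $\pi^{\boxtimes}\to\pi^{\boxtimes}\sigma$), which is why the compatibility is not formal. The paper supplies precisely this missing input on the other side: it exhibits an explicit map $f:\pi^{\otimes}\circ\Pi\to\bigoplus_{\sigma}\pi^{\boxtimes}\sigma$ whose components $f_\sigma$ are supported on pairwise distinct summands, verifies the pointwise identity $\prod\sigma=f(D)\circ\pi^{\otimes}(u)$, and then gets the isomorphism because $\ker f$ is a summand killed by $(*)$. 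Your argument can be repaired by proving the analogous pointwise identity on the covariant side, but that identity --- not the $\Tor$-vanishing you invoke --- is the real content of the step. (Relatedly, your closing surjectivity remark argues about the abstract isomorphism rather than about $\Upsilon$ itself, a symptom of the same missing identification.)
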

\begin{proof}
Since the $\pi_i$ are additive, there is a decomposition
\[\pi^{\otimes}(a_1\oplus\cdots\oplus a_r)=\bigoplus_{1\le i_1,\dots,i_{r}\le r} \pi_1(a_{i_1})\otimes\cdots \otimes \pi_r(a_{i_r})\;.\]
For all $\sigma\in \Si_r$ we let $f_\sigma:\pi^{\otimes}(a_1\oplus\cdots\oplus a_r)\to \pi_{\sigma(1)}(a_1)\otimes\cdots\otimes \pi_{\sigma(r)}(a_r)$ be the map which is zero on all summands of $\pi^{\otimes}(a_1\oplus\cdots\oplus a_r)$, except on the summand such that $i_k=\sigma^{-1}(k)$ for all $k$, where it is defined by $f_\sigma(x_1\otimes\cdots\otimes x_r)= x_{\sigma(1)}\otimes\cdots\otimes x_{\sigma(r)}$.
The maps $f_\sigma$ are the components of an epimorphism
\[f: \pi^{\otimes}\circ \Pi\to \bigoplus_{\sigma\in \Si_r} \pi^{\boxtimes}\sigma\;,\]
which satisfies the following properties.
\begin{enumerate}[(1)]
\item The kernel of $f$ is a direct summand of $\pi^{\otimes}\circ \Pi$, and it is isomorphic to a direct sum of external tensor products $F_1\boxtimes \cdots\boxtimes F_r$ in which at least one of the $F_i$ is constant. 
\item The map $\prod \sigma: \pi^{\otimes} \to \bigoplus_{\sigma\in\Si_r} \pi^{\otimes}\sigma$ is equal to the following composition, in which $u:\id\to \Pi\circ D$ is the unit of an adjunction between $D$ and $\Pi$:
\[\pi^{\otimes}\xrightarrow[]{\pi^{\otimes}(u)}\pi^\otimes\circ \Pi\circ D \xrightarrow[]{f(D)}\bigoplus_{\sigma\in\Si_r} \pi^{\boxtimes}\sigma\circ D=\bigoplus_{\sigma\in\Si_r} \pi^{\otimes}\sigma\;.\]
\end{enumerate}
Property (2) implies that there is a commutative diagram:
\[
\begin{tikzcd}
&\Tor^{\kk[\A]}_*(\pi^{\otimes}\circ \Pi\circ D,\rho^\otimes)\ar{r}[swap]{\res^D}\ar{d}{f(D)_*}
&\Tor^{\kk[\A^r]}_*(\pi^{\otimes}\circ \Pi,\rho^\boxtimes)\ar{d}{f_*}\\ 
\mathbb{T}_*\ar{ru}{\pi^{\otimes}(u)_*}
\ar{r}[swap]{\prod \sigma}& \Tor^{\kk[\A]}_*(\bigoplus\pi^{\otimes}\sigma,\rho^\otimes)\ar{r}[swap]{\res^D} 
&\Tor^{\kk[\A^r]}_*(\bigoplus\pi^{\boxtimes}\sigma,\rho^\boxtimes) 
\end{tikzcd}
\]
and property (1) implies (by the same reasonning as in the proof of proposition \ref{pr-vanish}) that $f_*$ is an isomorphism. Now the composition of $\res^D$ and $\pi^{\otimes}(u)_*$ is an isomorphism by lemma \ref{lm-tor-adj}, hence the bottom row of the diagram is an isomorphism. The result follows.
\end{proof}

For further computations, we assume that we are given tuples $\mathbf{r}=(r_1,\dots,r_n)$ and $\mathbf{s}=(s_1,\dots,s_m)$ such that $r_1+\cdots+r_n = r = s_1+\cdots+ s_m$, and that $\pi^{\otimes}$ and $\rho^\otimes$ have the following specific form:
\begin{align*}
&\pi^\otimes = \pi_1^{\otimes r_1}\otimes\cdots\otimes \pi_n^{\otimes r_n}\;,&\rho^\otimes = \rho_1^{\otimes s_1}\otimes\cdots\otimes \rho_m^{\otimes s_m}\;.
\end{align*}
If we denote by $\Si_{\mathbf{r}}$ the Young subgroup $\Si_{r_1}\times\cdots\times\Si_{r_n}\subset \Si_{r}$, then every $\sigma\in \Si_{\mathbf{r}}$ induces an endomorphism of $\pi^{\otimes}$, and this defines a right action of $\Si_{\mathbf{r}}$ on $\pi^{\otimes}$. Similarly, there is a right action of the Young subgroup $\Si_{\mathbf{s}}$ on $\rho^\otimes$, hence $\mathbb{T}_*$ is a right $\Si_{\mathbf{r}}\times\Si_{\mathbf{s}}$-module. The next lemma is proved by a straightforward verification.
\begin{lm}\label{lm-action} There is a right $\Si_{\mathbf{r}}\times\Si_{\mathbf{s}}$-action on $\bigoplus_{\sigma\in\Si_{r}}\mathbf{T}_*^\sigma$ with respect to which $\Upsilon$ is a morphism of $\Si_{\mathbf{r}}\times \Si_{\mathbf{s}}$-modules. It is given as follows: if $(\tau,\mu)\in\Si_{\mathbf{r}}\times\Si_{\mathbf{s}}$ and if $x=x_1\otimes \cdots \otimes x_{r}\in \mathbf{T}_*^\sigma$, then 
\[x\cdot (\tau,\mu) = \epsilon x_{\mu(1)}\otimes \cdots \otimes x_{\mu(r)}\in \mathbf{T}_*^{\tau^{-1}\sigma\mu}\;,\]
where $\epsilon$ is the usual Koszul sign determined by $\mu$ and the degrees of the $x_i$, i.e. the sign such that $x_1\cdots x_{r}=\epsilon x_{\mu(1)}\cdots x_{\mu(r)}$ in the free graded commutative algebra on $x_1,\dots,x_r$.
\end{lm}

Now let $U$ be a left $\Si_{\mathbf{r}}$-module. Then, instead of $\pi^{\otimes}$ we can consider the functor $\pi^{\otimes}\otimes_{\Si_{\mathbf{r}}} U$. Similarly if $V$ is a left $\Si_{\mathbf{s}}$-module, we consider the functor $\rho^{\otimes}\otimes_{\Si_{\mathbf{s}}} V$. The following lemma gives another $\Tor$-computation, in terms of our computation of the right $\Si_{\mathbf{r}}\times \Si_{\mathbf{s}}$-module $\mathbb{T}_*$. Note that by Maschke's theorem, the projectivity hypothesis on $U$ and $V$ is automatically satisfied if the integers $r_i$ and $s_j$ are all invertible in $\kk$ (thus if $\kk$ has characteristic zero, or big prime characteristic).
\begin{pr}\label{pr-big-char}
Assume that $U$ and $V$ are projective as modules over the symmetric group. There is a graded isomorphism, natural with respect to $U$ and $V$:
\[\Tor_*^{\kk[\A]}(\pi^{\otimes}\otimes_{\Si_{\mathbf{r}}} U,\rho^{\otimes}\otimes_{\Si_{\mathbf{s}}} V)\simeq \mathbb{T}_*\otimes_{\Si_{\mathbf{r}}\times\Si_{\mathbf{s}}} (U\otimes V)\;.\]
\end{pr}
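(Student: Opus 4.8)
The plan is to reduce Proposition~\ref{pr-big-char} to the already-established computation of the right $\Si_{\mathbf{d}}\times\Si_{\mathbf{e}}$-module $\mathbb{T}_*$ by exhibiting $\pi^{\otimes}\otimes_{\Si_{\mathbf{d}}}U$ as a direct summand of a functor of the form already treated. First I would use the projectivity of $U$ as a $\kk[\Si_{\mathbf{d}}]$-module: by Maschke-type reasoning, $U$ is a direct summand of a free module $\kk[\Si_{\mathbf{d}}]^{\oplus J}$, so $\pi^{\otimes}\otimes_{\Si_{\mathbf{d}}}U$ is a direct summand of $\pi^{\otimes}\otimes_{\Si_{\mathbf{d}}}\kk[\Si_{\mathbf{d}}]^{\oplus J}\cong (\pi^{\otimes})^{\oplus J}$, and similarly for $V$. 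Both $\Tor_*^{\kk[\A]}(-,-)$ and the functor $-\otimes_{\Si_{\mathbf{d}}\times\Si_{\mathbf{e}}}(U\otimes V)$ send arbitrary direct sums in their functorial variables to direct sums (the former because $\kk[\A]$-$\Tor$ is computed from projective resolutions built out of representable functors and because tensor product over $\A$ is cocontinuous; the latter trivially). Hence it suffices to produce a natural isomorphism for a single fixed pair $(U,V)$ that is ``large enough'' to detect everything, and the natural candidate is the regular representation.

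The key step is then the case $U=\kk[\Si_{\mathbf{d}}]$ and $V=\kk[\Si_{\mathbf{e}}]$. Here $\pi^{\otimes}\otimes_{\Si_{\mathbf{d}}}\kk[\Si_{\mathbf{d}}]\cong \pi^{\otimes}$ as a functor, but I must keep track of the bimodule structure: the residual left $\Si_{\mathbf{d}}$-action, combined with the functorial right $\Si_{\mathbf{d}}$-action used to form $\pi^{\otimes}\otimes_{\Si_{\mathbf{d}}}U$ for general $U$, is exactly the statement that for $U=\kk[\Si_{\mathbf{d}}]$ the identification $\pi^{\otimes}\otimes_{\Si_{\mathbf{d}}}\kk[\Si_{\mathbf{d}}]\cong\pi^{\otimes}$ transports the left-multiplication action on the regular representation to the functorial $\Si_{\mathbf{d}}$-action on $\pi^{\otimes}$. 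Thus $\mathbb{T}_*\otimes_{\Si_{\mathbf{d}}\times\Si_{\mathbf{e}}}(\kk[\Si_{\mathbf{d}}]\otimes\kk[\Si_{\mathbf{e}}])\cong\mathbb{T}_*$ with its right $\Si_{\mathbf{d}}\times\Si_{\mathbf{e}}$-structure forgotten but recording the other side, which matches $\Tor_*^{\kk[\A]}(\pi^{\otimes},\rho^{\otimes})=\mathbb{T}_*$. For general projective $U$, $V$ one then applies the idempotents: writing $U$ as $e\cdot\kk[\Si_{\mathbf{d}}]^{\oplus J}$ for an idempotent $e$ in a matrix ring over $\kk[\Si_{\mathbf{d}}]$ and similarly for $V$, the isomorphism for the free case is $\Si_{\mathbf{d}}\times\Si_{\mathbf{e}}$-equivariant, so applying $e$ and the corresponding idempotent for $V$ to both sides yields the desired natural isomorphism.

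The main obstacle I anticipate is bookkeeping of the two commuting $\Si$-actions and the Koszul signs of Lemma~\ref{lm-action}: one must verify that the isomorphism $\pi^{\otimes}\otimes_{\Si_{\mathbf{d}}}U\cong$ (direct summand of) copies of $\pi^{\otimes}$ is natural \emph{and} equivariant for the residual $\Si_{\mathbf{d}}$-action, so that passing to $\Tor$ and then to coinvariants under idempotents genuinely commutes with the identification. Concretely, the point to check carefully is that the right action of $\Si_{\mathbf{d}}\times\Si_{\mathbf{e}}$ on $\mathbb{T}_*$ described before Lemma~\ref{lm-action}, transported through the isomorphism $\Upsilon$, is exactly the action of Lemma~\ref{lm-action} including the Koszul sign $\epsilon$; this is needed to know that $-\otimes_{\Si_{\mathbf{d}}\times\Si_{\mathbf{e}}}(U\otimes V)$ applied to $\mathbb{T}_*$ is computed correctly. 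Once equivariance is in place, the reduction via projectivity and idempotents is formal, and naturality in $U$ and $V$ is automatic since every map in the construction is a map of $\kk[\Si_{\mathbf{d}}\times\Si_{\mathbf{e}}]$-modules.
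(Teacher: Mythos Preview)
Your proposal is correct and follows essentially the same route as the paper: reduce to the case where $U$ and $V$ are (direct sums of) the regular representations $\kk[\Si_{\mathbf{d}}]$ and $\kk[\Si_{\mathbf{e}}]$, where the isomorphism is immediate, and then pass to direct summands using projectivity. The paper dispatches this in two sentences; your additional care about equivariance and Koszul signs is just making explicit what the paper leaves implicit.
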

\begin{proof}
The formula is obviously true if $U$ and $V$ are direct sums of copies of the group rings $\kk[\Si_{\mathbf{r}}]$ and $\kk[\Si_{\mathbf{s}}]$ respectively.
The general result follows from the fact that $U$ and $V$ are direct summands of such direct sums. 
\end{proof}
\begin{rk}[The relevance of $\pi^{\otimes}\otimes_{\Si_{\mathbf{r}}} U$] Let $U_i$ be a left representation  of $\Si_{r_i}$, and let $F_i:\Proj_\kk\to \V_\kk$ denote the functor such that $F_i(v)= v^{\otimes r_i}\otimes_{\Si_{r_i}}U_i$, where the right action of $\Si_{r_i}$ on $v^{\otimes r_i}$ is given by permuting the factors of the tensor product. Then if $U=\bigotimes_{1\le i\le n}U_i$ we have 
\[\pi^{\otimes}\otimes_{\Si_{\mathbf{r}}}U \simeq \pi_1^*F_1 \otimes\cdots\otimes \pi_n^*F_n\;.\]
Assume for simplicity that $\kk$ is algebraically closed. Then it follows from \cite[Theorem 4]{DTV} that if the $N_i$ are projective and simple $\Si_{e_i}$-modules, and if the additive functors $\pi_1,\dots,\pi_m$ are pairwise non-isomorphic, simple, with finite-dimensional values, then $\pi^{\otimes}\otimes_{\Si_{\mathbf{d}}}U$ is a simple functor. 
If $\kk$ has characteristic zero, such simple functors exhaust all the simple polynomial functors with finite-dimensional values (and by \cite[Theorem 3 and Corollary 4.15]{DTV} it exhausts all the simple functors with finite-dimensional values if in addition $\A$ is $\mathbb{Q}$-linear). If $\kk$ has positive characteristic, this does not exhaust all the simple polynomial functors with finite-dimensional values, but provides nonetheless many examples of simple functors. 
\end{rk}

If $\pi^{\otimes}=\pi^{\otimes r}$ for some additive functor $\pi$, then the description of the source of $\Upsilon$ can be simplified. Namely, we have an isomorphism of graded $\Si_\mathbf{r}\times\Si_{\mathbf{s}}$-modules:
\begin{align}\bigoplus_{\sigma\in\Si_r}\mathbf{T}_*^\sigma\simeq \kk[\Si_r]\otimes \mathfrak{T}_*^{\mathbf{s}}
\end{align}
where the group ring $\kk[\Si_r]$ is placed in degree zero, with action of $\Si_\mathbf{r}\times\Si_{\mathbf{s}}$ given by $\sigma\cdot (\tau,\mu)=\tau^{-1}\sigma\mu$ and $\mathfrak{T}_*^{\mathbf{s}}$ denotes the tensor product
\[\mathfrak{T}_*^{\mathbf{s}} := \Tor_*^{\kk[\A]}(\pi,\rho_1)^{\otimes s_1}\otimes\cdots\otimes \Tor_*^{\kk[\A]}(\pi,\rho_m)^{\otimes s_m}
\]
with action of $\Si_\mathbf{r}\times\Si_{\mathbf{s}}$ action given by $(x_1\otimes \cdots\otimes x_{r})\cdot (\tau,\mu) = \epsilon x_{\mu(1)}\otimes \cdots\otimes x_{\mu(r)}$ with the usual Koszul sign $\epsilon$ as in lemma \ref{lm-action}.

If $U$ equals $\kk_\mathrm{triv}$, the trivial $\Si_\mathbf{r}$-module of dimension one, then 
\[\mathbb{T}_*\otimes_{\Si_\mathbf{r}\times\Si_{\mathbf{s}}}(\kk_\mathrm{triv}\otimes V) \simeq (\mathbb{T}_*\otimes_{\Si_\mathbf{r}}\kk_\mathrm{triv})\otimes_{\Si_{\mathbf{s}}}V \simeq \mathfrak{T}_*^{\mathbf{s}}\otimes_{\Si_{\mathbf{s}}} V\;.\]
Hence proposition~\ref{pr-big-char} gives the computation of example \ref{ex-intro} (remind that $r=s$ is here assumed to be invertible in $\kk$, giving the projectivity hypothesis of proposition~\ref{pr-big-char}), as $\pi^{\otimes}\otimes_{\Si_{\mathbf{r}}}\kk_\mathrm{triv}\simeq\pi^*S^r$. Further explicit computations could be obtained with other choices of $U$.

\bibliographystyle{plain}

\bibliography{biblio-add}

\end{document}